\newtheorem{thm}{Theorem}[section]
\newtheorem{prop}[thm]{Proposition}
\newtheorem{lem}[thm]{Lemma}
\newtheorem{cor}[thm]{Corollary}
\newtheorem*{thm*}{Theorem}
\newtheorem*{Kor*}{Korollar}
\theoremstyle{definition}
\newtheorem{rem}[thm]{Remark}
\newtheorem*{cond*}{Condition}
\newcommand{\norm}[1]{\ensuremath{\lVert #1 \rVert}}
\newcommand{\abs}[1]{\ensuremath{\lvert #1 \rvert}}
\newcommand{\eps}{\varepsilon}
\newcommand{\ud}{\mathrm{d}}
\newcommand{\ui}{\mathrm{i}}
\newcommand{\R}{\mathbb{R}}
\newcommand{\N}{\mathbb{N}}
\newcommand{\eff}{\mathrm{eff}}
\DeclareMathOperator{\vol}{vol}
\DeclareMathOperator{\grad}{grad}
\DeclareMathOperator{\divg}{div}
\DeclareMathOperator{\dist}{dist}
\DeclareMathOperator{\tr}{tr}
\DeclareMathOperator{\supp}{supp}
\newlist{teile}{enumerate}{2}
\setlist[teile,1]{label=\arabic*),fullwidth}
\setlist[teile,2]{label=(\alph*), fullwidth}
\begin{document}
\title{Convergence of nodal sets in the adiabatic limit}
\author{Jonas Lampart
\thanks{CEREMADE, Universit\'e Paris-Dauphine; \textsc{lampart@ceremade.dauphine.fr}}}
\maketitle
\begin{abstract}
 We study the nodal sets of non-degenerate eigenfunctions of the Laplacian on fibre bundles $\pi{:}\, M\to B$ in the adiabatic limit. This limit consists in considering a family $G_\varepsilon$ of Riemannian metrics, that are close to Riemannian submersions, for which the ratio of the diameter of the fibres to that of the base is given by $\varepsilon \ll 1$.
 
We assume $M$ to be compact and allow for fibres $F$ with boundary, under the condition that the ground state eigenvalue of the Dirichlet-Laplacian on $F_x$ is independent of the base point. We prove for $\mathrm{dim}(B) \leq 3$ that the nodal set of the Dirichlet-eigenfunction $\varphi$ converges to the pre-image under $\pi$ of the nodal set of a function $\psi$ on $B$ that is determined as the solution to an effective equation. In particular this implies that the nodal set meets the boundary for $\varepsilon$ small enough and shows that many known results on this question, obtained for some types of domains, also hold on a large class of manifolds with boundary.
For the special case of a closed manifold $M$ fibred over the circle $B=S^1$ we obtain finer estimates and prove that every connected component of the nodal set of $\varphi$ is smoothly isotopic to the typical fibre of $\pi{:}\, M\to S^1$.
\end{abstract}

\section{Introduction}
For an eigenfunction $\varphi$ of a differential operator on the manifold $M$ the nodal set is defined as $\mathcal{N}(\varphi):=\overline{\varphi^{-1}(0)\cap(M\setminus \partial M)}$. The complex patterns that this set forms were discovered by Chladni~\cite{Chlad}, who made them visible by letting sand settle at those points on a vibrating plate where it was at rest.
Subsequently many aspects of the nodal sets have been studied by mathematicians. One of the basic concepts related to the nodal set is that of a nodal domain, a connected component of $M\setminus \mathcal{N}(\varphi)$. The fundamental theorem on nodal domains is originally due to Courant and gives an upper bound on the number of nodal domains (see~\cite[chapter 6]{SY}).
\begin{thm*}[Courant's nodal domain theorem]
 Let $(M,G)$ be a compact, connected Riemannian manifold with boundary. Let $0\leq\lambda_0< \lambda_1\leq \dots$ be the eigenvalues  of $-\Delta_G$ with Dirichlet boundary conditions, repeated according to multiplicity. If $\varphi_k$ is an eigenfunction corresponding to $\lambda_k$, the number of nodal domains of $\varphi_k$ is at most $k+1$.
\end{thm*}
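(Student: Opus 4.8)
I would argue by contradiction, combining the variational (min--max) characterisation of $\lambda_k$ with the unique continuation principle for the Laplacian. Assume $\varphi_k$ has at least $k+2$ nodal domains $\Omega_1,\dots,\Omega_{k+2}$. For $j=1,\dots,k+1$ define $u_j$ to be $\varphi_k$ on $\Omega_j$ and $0$ on $M\setminus\Omega_j$. A truncation argument --- using that $\varphi_k\in H^1_0(M)$, that the $\Omega_j$ are open, and that $\varphi_k$ vanishes on $\partial\Omega_j\cap(M\setminus\partial M)$ --- shows $u_j\in H^1_0(M)$; testing the weak form of $-\Delta_G\varphi_k=\lambda_k\varphi_k$ against $u_j$ then gives $\int_M\abs{\nabla u_j}^2\,\ud\vol=\lambda_k\int_M u_j^2\,\ud\vol$. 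The $u_j$ have pairwise disjoint supports, hence are linearly independent, so $V:=\operatorname{span}\{u_1,\dots,u_{k+1}\}$ has dimension $k+1$.

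Next, fix an $L^2$-orthonormal family $\varphi_0,\dots,\varphi_{k-1}$ of eigenfunctions for $\lambda_0,\dots,\lambda_{k-1}$ and pick $w=\sum_{j=1}^{k+1}c_j u_j\in V$, $w\neq0$, satisfying the $k$ linear constraints $\langle w,\varphi_i\rangle_{L^2}=0$, $i=0,\dots,k-1$ (possible since there are $k+1$ coefficients and only $k$ equations, and $w\neq 0$ by independence of the $u_j$). Disjointness of the supports and the identity above yield $\int_M\abs{\nabla w}^2\,\ud\vol=\sum_j c_j^2\int_M\abs{\nabla u_j}^2\,\ud\vol=\lambda_k\sum_j c_j^2\int_M u_j^2\,\ud\vol=\lambda_k\int_M w^2\,\ud\vol$. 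Expanding $w$ in an orthonormal eigenbasis of $-\Delta_G$ extending $\varphi_0,\dots,\varphi_{k-1}$ and using $w\perp\varphi_0,\dots,\varphi_{k-1}$, one gets $\int_M\abs{\nabla w}^2\,\ud\vol-\lambda_k\int_M w^2\,\ud\vol=\sum_{\ell\geq k}(\lambda_\ell-\lambda_k)\abs{\langle w,\varphi_\ell\rangle}^2\geq0$; since this quantity vanishes, all coefficients of $w$ on eigenvalues strictly larger than $\lambda_k$ vanish, so $w$ is a (finite) combination of eigenfunctions with eigenvalue exactly $\lambda_k$, i.e.\ $w$ is itself a Dirichlet eigenfunction, $-\Delta_G w=\lambda_k w$.

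Finally, $w$ vanishes identically on the nonempty open set $\Omega_{k+2}$, which meets the (connected) interior $M\setminus\partial M$. By interior elliptic regularity $w$ is smooth there, and by the weak unique continuation property of $-\Delta_G-\lambda_k$ (Aronszajn's theorem, valid for a smooth Riemannian metric) $w\equiv0$ on $M\setminus\partial M$, hence on $M$ --- contradicting $w\neq0$. Therefore $\varphi_k$ has at most $k+1$ nodal domains.

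The routine points are the $H^1_0$-membership of the $u_j$ and the legitimacy of the integration by parts across the nodal set, which require a small approximation lemma rather than a pointwise computation. The step I regard as the essential analytic input --- the real content beyond the linear-algebra counting --- is the appeal to unique continuation: it is exactly what rules out a spurious, possibly very small, extra nodal domain on which the eigenfunction $w$ would vanish.
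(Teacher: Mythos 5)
Your argument is correct and is precisely the standard proof of Courant's theorem (min--max on a linear combination of truncations to nodal domains, forcing equality in the Rayleigh quotient, then unique continuation to exclude the extra domain); the paper does not prove this statement itself but only quotes it as classical background with a reference to Schoen--Yau, where essentially this argument appears. The two points you flag as needing care --- the $H^1_0$-membership of the truncations $u_j$ and the appeal to Aronszajn's unique continuation theorem --- are indeed the only non-routine steps, and both are handled correctly.
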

One is immediately drawn to ask when this bound is sharp. Certainly the ground state $\varphi_0$ has exactly one nodal domain, and since $\varphi_0$ may be chosen real and positive $\varphi_1$ must have exactly two. On the contrary Pleijel showed that for domains in $\R^2$ the bound can only be attained for a finite number of eigenvalues~\cite{Plj}.
A more specific question is the relation of $\mathcal{N}(\varphi)$ to the boundary. Payne conjectured~\cite{Pay}, again for domains in $\R^2$, that the nodal line of $\varphi_1$ joins two points on the boundary. This conjecture has been proven by Melas for convex domains~\cite{Mel} and other sufficient conditions were found by M. Hoffmann-Ostenhof, T. Hoffmann-Ostenhof and Nadirashvili~\cite{H2Oconj}.
For general domains however these authors found a counter example~\cite{H2Oconj}. Such counter examples were later given by Fournais~\cite{Four} for domains in $\R^d$, $d\geq 2$ and by Freitas~\cite{Fr} for the unit disc with a non-Euclidean metric.

More detailed and quantitative results can be obtained for special types of domains. For instance Jerison~\cite{JerDiam} and Grieser-Jerison~\cite{GrJe} were able to obtain estimates on the location of the nodal set of $\varphi_1$, implying in particular that it meets the boundary, for convex two-dimensional domains of large eccentricity. For such domains in higher dimensions Jerison~\cite{Jer} also proved that the nodal set of $\varphi_1$ touches the boundary.
Similar ideas were used by the same authors to estimate the location and size of the maximum of $\varphi_0$~\cite{GrJeSize}.
Freitas and Krej{\v{c}}i{\v{r}}{\'\i}k~\cite{FrKr} considered a different type of \enquote{thin} domains. These are given as embeddings of $[0,1]\times \Omega$ into $\R^k$, where $\Omega \subset \R^{k-1}$ is a compact domain, such that the image of $\Omega$ has diameter $\eps\ll 1$. They show that there is a number $N(\eps)$, increasing as $\eps \to 0$, of eigenfunctions that attain Courant's bound, and that the nodal sets of these eigenfunctions touch the boundary.
In a recent contribution~\cite{KTnodal}, which appeared after the preprint of the present article, Krej{\v{c}}i{\v{r}}{\'\i}k and Tu{\v{s}}ek proved the same result for thin tubular neighbourhoods of codimension one hypersufraces in $\R^d$.
The results~\cite{FrKr,GrJe,  GrJeSize, JerDiam, Jer, KTnodal} all rely on analysing the problem in an asymptotic situation, where in fact the behaviour of $\varphi$ can be determined using the solution to an effective equation, which is an equation on the unit interval (except in~\cite{KTnodal}).

In the present paper we will pursue similar ideas, in that we consider \enquote{thin} fibre bundles $\pi{:}\,M\to B$ and show how to determine the behaviour of $\mathcal{N}(\varphi)$ using an effective equation on the base $B$. This will allow us to obtain results in the spirit of~\cite{FrKr,GrJe, JerDiam, Jer} for a large class of compact manifolds, both closed and with boundary. In the latter case this answers the question, posed by Schoen and Yau~\cite[problem 45]{SY}, whether such ideas apply to manifolds with boundary. The case of closed manifolds allows for even more precise results, at least in the case $B=S^1$, where we are able to not only locate the nodal set but also determine it up to smooth isotopy.

\section{Nodal sets in the adiabatic limit}
Let $M$, $F$ be a compact, connected manifolds with boundary and $B$ a closed (i.e.~compact without boundary) and connected manifold. Let
\begin{equation*}
 \pi{:}\, M\to B
\end{equation*}
be a smooth fibre bundle with fibre $F$, that is for every $x\in B$ there exists a neighbourhood $U$ of $x$ such that $\pi^{-1}(U)$ is diffeomorphic to $U\times F$. 
Let $g$, $g_B$ be metrics of $M$ and $B$ respectively such that the differential $\pi_*$ induces an isometry $\pi_*{:}\, TM/\ker \pi_* \to TB$. Then we can write 
\begin{equation*}
g=g_F + \pi^*{g_B}\,,
\end{equation*}
where $g_F$ vanishes on horizontal vectors, that is on the $g$-orthogonal complement of $TF:=\ker \pi_*$.
On every fibre $F_x:=\pi^{-1}(x)$, $g_F$ is just given by the restriction $g\vert_{TF}$ to the tangent space of $F_x$, which we call the vertical subspace of $TM$. The adiabatic limit of $(M, g)$ is defined to be the family $(M, g_\eps)$, $0<\eps<1$, with
\begin{equation}\label{eq:geps}
 g_\eps:=g_F + \eps^{-2}\pi^*g_B\,.
\end{equation}
Clearly the diameter of $B$ grows like $\eps^{-1}$, while that of the fibres is fixed, and so the fibres become thin relative to the base.

In order to account also for metrics that arise from shrinking families of embeddings as discussed in~\cite{FrKr} we will consider slightly more general metrics $G_\eps$, which are perturbations of $g_\eps$ and whose exact form we discuss in section~\ref{sect:pert}. 
These metrics will lead us to an operator of the form
$-\Delta_{g_\eps} + \eps H_1$, which is unitarily equivalent to $-\Delta_{G_\eps}$. 
In order to understand this operator we first note that the dual metric to $g_\eps$ on $T^*M$ (which we denote by the same symbol) acts on $\pi^*\xi \in \pi^*T^*B$ as $g_\eps(\pi^*\xi,\pi^*\xi)=\eps^2g_B(\xi, \xi)$.
Hence we have
\begin{equation*}
 \Delta_{g_\eps}=\eps^2 \Delta_h + \Delta_F\,,
\end{equation*}
 where $\Delta_F$ is the Laplace-Beltrami operator of the metric restricted to the fibres and 
\begin{equation*}
 \Delta_h:=\tr_{\pi^*g_B} \nabla^2 - \eta
\end{equation*}
is a horizontal differential operator determined by the Levi-Cività connection $\nabla$ and the mean curvature vector $\eta$ of the fibres, both with respect to $g=g_{\eps=1}$. Possibly the most important feature of this decomposition is that the vertical part $\Delta_F$ is independent of $\eps$.

From now on we will assume:
\begin{cond*}
The ground state energy of the fibre Laplacian with Dirichlet conditions
\begin{equation*}
 \Lambda_0(x):=\min_{0\neq\phi\in W^1_0(F_x, g_{F_x})} \norm{\phi}_{L^2(F_x, g_{F_x})}^{-2} \int_{F} g_{F_x}(\ud\phi, \ud \phi)\,\mathrm{vol}_{g_{F_x}} 
\end{equation*}
is constant, i.e~does not depend on the base point $x\in B$.
\end{cond*}
This is of course always true if $M$ is closed, since then $\Lambda_0\equiv 0$. 
We define 
\begin{equation*}
H:=-\Delta_{g_\eps} + \eps H_1 - \Lambda_0\,,
\end{equation*}
with Dirichlet boundary conditions (i.e.~on the domain $D(H):= W^2(M)\cap W^1_0(M)$, where we denote by $W^k(M)$ the $L^2$-Sobolev space $W^{k,2}(M,g)$, which is independent of $g$ as a topological vector space so we will only make the dependence on the metric explicit if we use a specific norm). This operator and its eigenfunctions will be our object of study. 

In our recent work in collaboration with Teufel~\cite{LT} we developed an approximation scheme for operators of this form.
For the case at hand this gives us an effective operator on $L^2(B)$, which can be used to obtain approximations of eigenvalues and eigenfunctions of $H$ for small $\eps$ (see theorem~\ref{thm:adiab} for the precise statement).
 Just as in the earlier works~\cite{FrKr,GrJe,  GrJeSize, JerDiam, Jer, KTnodal} it will be these approximate eigenfunctions that allow us to locate the nodal sets of the true eigenfunctions. 
More precisely, we will find an $\eps$-independent operator $H_0$ on $B$ whose eigenvalues $\mu$ are in one-to-one correspondence with the eigenvalues of $H$.
In particular if $\mu$ is a simple eigenvalue of $H_0$, then there exists a unique eigenvalue $\lambda=\eps^2 \mu + \mathcal{O}(\eps^3)$ of $H$, and it is also simple. 
Our main result is:
\begin{thm*}
Assume $\dim B\leq 3$ and that $G_\eps$ satisfies the conditions specified in section~\ref{sect:pert}. Let $H_0$ be the self-adjoint operator with domain $W^2(B)\subset L^2(B)$ given by~\eqref{eq:Ha} and denote by $\phi_0$ the positive ground state of $-\Delta_F$. 

Then for every simple eigenvalue $\mu\in \sigma(H_0)$ with normalised eigenfunction $\psi\in \ker(H_0-\mu)$ and $\eps$ small enough, there exist $\lambda=\eps^2 \mu + \mathcal{O}(\eps^3)\in \sigma(H)$ and $\varphi \in \ker (H - \lambda)$ such that $\varphi\to\pi^*\psi\phi_0$ uniformly as $\eps\to 0$.
If zero is a regular value of $\psi$, then $\varphi$ has at least as many nodal domains as $\psi$ and $\mathcal{N}(\varphi)$ converges to $\pi^{-1}\mathcal{N}(\psi)$ in Hausdorff distance. If also $\partial M\neq \varnothing$, then $\mathcal{N}(\varphi)\cap \partial M \neq \varnothing$.

In the special case $B=S^1$ and $\partial M=\varnothing$ the nodal set of $\varphi$ consists of finitely many submanifolds $F\hookrightarrow M$ and is smoothly isotopic through embeddings to $\pi^{-1}\mathcal{N}(\psi)$.
\end{thm*}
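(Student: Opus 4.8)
The plan is to build the eigenfunction $\varphi$ by a perturbative construction starting from the approximate eigenfunction $\pi^*\psi\,\phi_0$, using the superadiabatic projection scheme of~\cite{LT} already invoked in theorem~\ref{thm:adiab}. Concretely, one takes $\lambda=\eps^2\mu+\mathcal{O}(\eps^3)$ and an exact eigenfunction $\varphi\in\ker(H-\lambda)$, normalised in $L^2$, and shows $\norm{\varphi-\pi^*\psi\,\phi_0}=\mathcal{O}(\eps)$ in a sufficiently strong norm. The point is that $\dim B\leq 3$ forces, via elliptic regularity and Sobolev embedding $W^2(B)\hookrightarrow C^0(B)$, that the $\mathcal{O}(\eps)$-error is controlled \emph{uniformly}: first get the remainder small in $W^2(M)$ (or a suitable $\eps$-weighted Sobolev space adapted to $g_\eps$), then invoke the embedding on the base fibrewise together with the smooth dependence of $\phi_0$ on $x$ to upgrade to a $C^0(M)$ bound. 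Hence $\varphi\to\pi^*\psi\,\phi_0$ uniformly; this is the content of the first displayed conclusion and everything else is geometry built on top of it.

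For the statements about the nodal set, I would argue as follows. Since $\phi_0>0$ on the interior of each fibre, the zeros of the limit $\pi^*\psi\,\phi_0$ in $M\setminus\partial M$ are exactly $\pi^{-1}(\psi^{-1}(0))$. If $0$ is a regular value of $\psi$, then near $\mathcal{N}(\psi)$ the function $\psi$ changes sign transversally, so $\pi^*\psi\,\phi_0$ changes sign across $\pi^{-1}\mathcal{N}(\psi)$; uniform convergence then gives, for $\eps$ small, a sign change of $\varphi$ across any small transversal, forcing $\mathcal{N}(\varphi)$ to intersect every small tube around $\pi^{-1}\mathcal{N}(\psi)$ and, conversely, to stay inside a small neighbourhood of it (because $\abs{\pi^*\psi\,\phi_0}$ is bounded below away from that neighbourhood, again using $\dim B\leq 3$ to keep $\phi_0$ uniformly positive on compact subsets of fibre interiors). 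This is precisely Hausdorff convergence $\mathcal{N}(\varphi)\to\pi^{-1}\mathcal{N}(\psi)$. The count of nodal domains follows because each nodal domain of $\psi$ pulls back to a region of $M$ on which $\pi^*\psi\,\phi_0$ has a definite sign and is bounded away from zero, hence contains points where $\varphi$ has that same sign, so $\varphi$ has at least that many sign-definite regions, i.e.\ at least as many nodal domains. When $\partial M\neq\varnothing$, a component of $\mathcal{N}(\psi)$ either lies in the interior of $B$ or — but $B$ is closed, so it always does — in any case $\psi$ must vanish somewhere (it is orthogonal to constants via the structure of $H_0$), and near such a zero $x_0$ the tube $\pi^{-1}$ of a neighbourhood meets $\partial M$ since $F$ has boundary; uniform convergence then puts a zero of $\varphi$ near $\partial M$, and a short additional argument (using that the sign change persists up to the boundary, where Dirichlet conditions make $\varphi$ vanish) yields $\mathcal{N}(\varphi)\cap\partial M\neq\varnothing$.

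For the refined statement with $B=S^1$ and $\partial M=\varnothing$, the plan is to exploit that $\mathcal{N}(\psi)\subset S^1$ is a finite set of points $\{x_1,\dots,x_{2k}\}$ at which $\psi$ vanishes transversally (zero is a regular value, and on $S^1$ sign changes come in pairs), so $\pi^{-1}\mathcal{N}(\psi)$ is a disjoint union of $2k$ copies of the fibre $F$. Near each $x_j$ the full eigenfunction $\varphi$ is, by the construction above together with the $\eps$-expansion of~\cite{LT}, $C^1$-close (not merely $C^0$-close) to $\pi^*\psi\,\phi_0$: on a one-dimensional base the effective equation is an ODE, so one gets control of $\partial_x\varphi$ along the base direction as well, and $\mathrm{d}(\pi^*\psi\,\phi_0)$ is nonzero on $\pi^{-1}(x_j)$ precisely because $\psi'(x_j)\neq 0$ and $\phi_0>0$ in the fibre interior. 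By the implicit function theorem $\varphi^{-1}(0)$ is therefore, near each $\pi^{-1}(x_j)$, a smooth hypersurface that is a section of $\pi$ over a small arc — hence diffeomorphic to $F$ — and transverse to the fibres; away from these tubes $\varphi$ is bounded away from zero, so these are all the components of $\mathcal{N}(\varphi)$. Finally, the explicit isotopy: interpolating linearly between $\varphi^{-1}(0)$ and $\pi^{-1}(x_j)$ within a tubular neighbourhood (e.g.\ pushing each point along the horizontal lift of the path in $S^1$ from its $\pi$-image to $x_j$) gives a smooth isotopy through embeddings, since transversality to the fibres is an open condition preserved along the interpolation. The main obstacle is the upgrade from $C^0$ to $C^1$ convergence near the fibres $\pi^{-1}(x_j)$: one must show the $\eps$-corrections from~\cite{LT} are differentiable and small in the relevant norm, and in particular that the horizontal derivative of the error does not blow up — this requires using the $\eps^2$-weighting of the horizontal Laplacian in $\Delta_{g_\eps}$ carefully, so that an $\mathcal{O}(\eps)$ error in the $g_\eps$-adapted $W^2$-norm still yields an $\mathcal{O}(1)\cdot\eps\cdot\eps^{-1}=\mathcal{O}(\eps)$ (or at worst $o(1)$) error in the ordinary horizontal derivative near the nodal fibres.
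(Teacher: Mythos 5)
Your skeleton matches the paper's (approximate eigenfunction $\pi^*\psi\,\phi_0$, uniform convergence, sign arguments for the nodal set, implicit function theorem over $S^1$), but the central technical device is missing and your proposed substitute does not work. You want to get the error small in $W^2(M)$ and then apply a Sobolev embedding. The adiabatic scheme of~\cite{LT} only delivers $\norm{\phi_0\psi_\eps-\varphi}_{W^1(M,g)}=\mathcal{O}(\eps)$, and upgrading to $W^2$ uniformly in $\eps$ is obstructed precisely because the horizontal part of $H$ is $\eps^2\Delta_h$, so elliptic regularity costs a factor $\eps^{-1}$ per horizontal derivative; moreover a Sobolev embedding on $M$ would require $\dim M\leq 3$, not $\dim B\leq 3$, and a ``fibrewise embedding on the base'' is not defined for a function on $M$ that does not factorize. (The paper explicitly notes that the higher-Sobolev route works in~\cite{KTnodal} only because there the vertical and horizontal operators commute.) What the paper actually does is a generalised maximum principle argument (lemma~\ref{lem:max}) in $\eps$-dependent geodesic coordinates of uniformly bounded geometry: a comparison function $u$ solving a local Dirichlet problem bounds $|f(x)|$ by a \emph{local} $W^1$ integral over the thin tube $\Omega(x)$ plus $\eps$, and only then is the Sobolev embedding on the $\eps R$-ball in $B$ invoked, producing the loss $\theta_d(\eps)$ that explains the restriction $d\leq 3$. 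Without this localisation step your uniform convergence claim is unsupported.

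Two further gaps. First, when $\partial M\neq\varnothing$ your lower bound on $|\pi^*\psi\,\phi_0|$ away from $\pi^{-1}\mathcal{N}(\psi)$ fails near $\partial M$, where $\phi_0$ vanishes; ``uniformly positive on compact subsets of fibre interiors'' does not exclude spurious zeros of $\varphi$ in a collar of the boundary far from $\pi^{-1}\mathcal{N}(\psi)$. The paper repairs this with the Hopf-type bound $|\phi_0/D|\geq C_4$ ($D$ the fibre distance to $\partial F$) combined with the $\mathscr{C}^1(M,g_\eps)$ estimate of corollary~\ref{cor:reg} applied to $(\psi\phi_0-\varphi)/D$; your argument for $\mathcal{N}(\varphi)\cap\partial M\neq\varnothing$ also needs the vector-field/connectedness construction of proposition~\ref{prop:boundary} rather than a ``short additional argument'' (and the aside that $\psi$ is orthogonal to constants is false, since $H_0$ has a potential). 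Second, for $B=S^1$ you correctly identify that $\mathscr{C}^0$ closeness is insufficient and that the horizontal derivative is the issue, but your proposed bookkeeping gives exactly $\mathcal{O}(1)\cdot\eps\cdot\eps^{-1}=\mathcal{O}(1)$ for $\partial_s^*(\psi\phi_0-\varphi)$, which is useless; the paper resolves this by running the maximum-principle lemma a second time on $f=\partial_s^*(\psi\phi_0-\varphi)$ itself (lemma~\ref{lem:deriv}), obtaining $\mathcal{O}(\sqrt\eps)$, which is what makes the graph/implicit-function-theorem and isotopy argument go through. So the obstacle you flag at the end is real and remains unfilled in your proposal.
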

The proof of this theorem is given via several, slightly more precise and quantitative, statements.
These are: theorem~\ref{thm:uniform} for the convergence of eigenfunctions, theorem~\ref{thm:node} for the estimate on the nodal count, proposition~\ref{prop:boundary} for the convergence of nodal sets and theorem~\ref{thm:iso} for the final statement.

The condition $\dim B\leq 3$ is due to our technique of proving uniform convergence of eigenfunctions. To do this we use the Sobolev embedding theorem $W^1(B)\hookrightarrow \mathscr{C}^0(B)$ in the case $\dim B=1$. In higher dimensions a similar technique allows us to trade the possible lack of regularity of $W^1(B)$ for a slower speed of convergence. This unfortunately gives no meaningful estimate if $\dim B>3$. In the recent article~\cite{KTnodal} the authors were able to obtain some results without restriction on the base dimension. Their technique involves proving convergence $\varphi\to \pi^*\psi \phi_0$ in Sobolev spaces of arbitrary order. This is facilitated by the fact that, in the setting they consider, the vertical and horizontal part of $\Delta_{g_\eps}$ commute, which is not the case in general.
The sharper estimates in the case $\dim B=1$ also allow for good control of the derivatives of the eigenfunctions, which makes it possible to prove that $\mathcal{N}(\varphi)$ is essentially a graph over, and hence isotopic to, $\pi^{-1}\mathcal{N}(\psi)$.

Our approach, in particular the results of~\cite{LT}, also apply to the case of a varying ground state energy.  
However, if for example $\Lambda_0$ has a non-degenerate minimum on $B$, the behaviour of the eigenvalues and eigenfunctions of $H$ is quite different. In this case the small eigenvalues of $H$ and the corresponding eigenfunctions behave like those of an harmonic oscillator. In particular the typical distance between the eigenvalues is of order $\eps$, as opposed to $\eps^2$ for constant $\Lambda_0$, and the eigenfunctions are exponentially localised in a neighbourhood of size $\sqrt \eps$ of the minimum of $\Lambda_0$. Hence in order to obtain non-trivial results one must blow up this neighbourhood and perform a  detailed analysis of the eigenfunctions there. Some results on this case are discussed in~\cite{Lam} and we will further analyse this in a future paper.

An interesting special case is given by thin tubes around embedded submanifolds $B\hookrightarrow \R^n$  (For embeddings into Riemannian manifolds our conditions on the induced metric $G_\eps$ will not be satisfied in general, see however~\cite[remark 3.6]{Lam}). These are often called quantum waveguides and admit many generalisations, including surfaces of such tubes, as discussed in~\cite{HLT}.
To illustrate this, let $T_\eps$ be the closed tubular neighbourhood of $\iota{:}\, B\hookrightarrow \R^n$ with radius $\eps$. Then $T_\eps$ is diffeomorphic to the fibre bundle $M:=\lbrace v\in NB: \abs{v}^2 \leq 1 \rbrace \stackrel{\pi}{\to} B$, via $v\mapsto \Phi_\eps(v):=\iota(\pi(v))+\eps v$, where $NB\subset T\R^n$ denotes the normal bundle and $\R^n$ carries the standard metric $\delta$. On $M$ we choose the metric
$G_\eps:=\eps^{-2} \Phi^*_\eps\delta$. This has the form  $G_\eps=g_F+\eps^{-2}(\iota^*\delta+\mathcal{O}(\eps))$, where $g_F$ is the standard metric on the unit ball (see e.g.\cite{HLT}, where explicit expressions for $G_\eps$ and the operators $H_0$ and $H_1$ are derived). Hence the fibres of $(M, G_\eps)$ are all isometric and thus $\Lambda_0$ is constant. We may thus apply our main theorem on $M$ and map the resulting statements to $T_\eps$ to obtain:
\begin{cor}
 Let $\iota{:}\, B\to \R^n$ with $\dim B\leq 3$ and $T_\eps$ be as above. Let $H_0$ and be as in the theorem and $\mu\in \sigma(H_0)$ a simple eigenvalue. 

Then for $\eps$ small enough there exists a unique eigenvalue $\lambda=\mu +\eps^{-2}\Lambda_0 +\mathcal{O}(\eps)$ of the Dirichlet Laplacian in $T_\eps$. If furthermore zero is a regular value of $\psi\in \ker(H_0-\mu)$ (hence in particular if $\dim B=1$) then any eigenfunction $\varphi$ corresponding to $\lambda$ has at least as many nodal domains as $\psi$, its nodal set intersects the boundary of $T_\eps$ and converges to $\mathcal{N}(\psi)$ in Hausdorff distance. 
\end{cor}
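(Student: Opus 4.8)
The corollary is a transcription of the main theorem along the diffeomorphism $\Phi_\eps{:}\, M\to T_\eps$, together with the bookkeeping of how the Euclidean Dirichlet Laplacian on $T_\eps$ corresponds to the operator $H$ on $M$. First I would note that $\Phi_\eps$ is by construction an isometry from $(M,\Phi_\eps^*\delta)=(M,\eps^2 G_\eps)$ onto $(T_\eps,\delta)$, and that, as recalled in the text and worked out explicitly in~\cite{HLT}, the metric $G_\eps=g_F+\eps^{-2}(\iota^*\delta+\mathcal{O}(\eps))$ belongs to the class of perturbed metrics admitted in section~\ref{sect:pert}; in particular the associated operators $H_0$ and $H_1$ are the ones appearing there, and the hypothesis $\dim B\leq 3$ is precisely what the main theorem requires. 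Since each fibre of $(M,G_\eps)$ is isometric to the flat unit ball of dimension $n-\dim B$, the fibre ground state energy $\Lambda_0$ equals the lowest Dirichlet eigenvalue of that ball, hence is a positive constant, so the standing condition holds; this also identifies the constant $\Lambda_0$ in the statement.

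Next I would set up the spectral dictionary. From $\Delta_{cg}=c^{-1}\Delta_g$ for a positive constant $c$ one gets $-\Delta_{\Phi_\eps^*\delta}=-\Delta_{\eps^2 G_\eps}=\eps^{-2}(-\Delta_{G_\eps})$ on $M$, so that pulling back by $\Phi_\eps$ and then applying the density unitary that intertwines $-\Delta_{G_\eps}$ with $-\Delta_{g_\eps}+\eps H_1$ identifies the Dirichlet Laplacian of $(T_\eps,\delta)$ with $\eps^{-2}(H+\Lambda_0)$. Consequently $\tilde\lambda$ is an eigenvalue of the Dirichlet Laplacian on $T_\eps$ if and only if $\eps^2\tilde\lambda-\Lambda_0$ is an eigenvalue of $H$; feeding in the simple eigenvalue $\lambda=\eps^2\mu+\mathcal{O}(\eps^3)$ of $H$ provided by the main theorem yields $\tilde\lambda=\eps^{-2}(\lambda+\Lambda_0)=\mu+\eps^{-2}\Lambda_0+\mathcal{O}(\eps)$, and its uniqueness near $\mu+\eps^{-2}\Lambda_0$ is the image of the one-to-one correspondence between simple eigenvalues of $H_0$ and of $H$ recalled before the theorem.

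Finally I would transport the nodal-set statements. Under $\Phi_\eps^*$ and division by the (positive) intertwining density, an eigenfunction $\varphi$ of the Dirichlet Laplacian on $T_\eps$ for $\tilde\lambda$ corresponds to an eigenfunction of $H$ with the same nodal set, and since $\Phi_\eps$ is a diffeomorphism taking $\partial M=\{\abs{v}^2=1\}\neq\varnothing$ onto $\partial T_\eps$ it induces bijections of nodal domains and of nodal sets respecting the boundary. Hence the lower bound on the number of nodal domains and the relation $\mathcal{N}(\varphi)\cap\partial T_\eps\neq\varnothing$ follow directly from the main theorem, using that $0$ is a regular value of $\psi$ (automatic for $\dim B=1$, since a non-trivial solution of $H_0\psi=\mu\psi$ on $S^1$ cannot have a double zero). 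For the Hausdorff convergence I would use that $\Phi_\eps(v)=\iota(\pi(v))+\eps v$ has a Lipschitz constant bounded uniformly in $\eps<1$ on the compact $M$, so the convergence $\mathcal{N}(\varphi)\to\pi^{-1}\mathcal{N}(\psi)$ of the main theorem passes to $\Phi_\eps(\mathcal{N}(\varphi))\to\Phi_\eps(\pi^{-1}\mathcal{N}(\psi))$ in $\R^n$, and $\Phi_\eps(\pi^{-1}\mathcal{N}(\psi))$ lies within distance $\eps$ of $\iota(\mathcal{N}(\psi))$, so $\mathcal{N}(\varphi)$ converges to $\mathcal{N}(\psi)$ in Hausdorff distance. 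The main point is not any single estimate but keeping the three identifications --- the conformal rescaling by $\eps^{-2}$, the density unitary, and the embedding $\Phi_\eps$ --- mutually consistent, and checking once and for all that $G_\eps$ satisfies the hypotheses of section~\ref{sect:pert}; this is routine given the formulas in~\cite{HLT,LT}.
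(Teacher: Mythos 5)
Your proposal is correct and follows essentially the route the paper intends: the paper gives no separate proof of the corollary but simply asserts that one applies the main theorem on $(M,G_\eps)$ and transports the conclusions to $T_\eps$ via $\Phi_\eps$, which is exactly the chain of identifications (conformal rescaling by $\eps^{-2}$, the density unitary $U_\rho$, and the diffeomorphism $\Phi_\eps$) that you spell out. Your additional checks --- constancy of $\Lambda_0$ from the isometric flat-ball fibres, the uniform Lipschitz bound on $\Phi_\eps$ for the Hausdorff convergence, and the ODE argument for regularity of the value $0$ when $\dim B=1$ --- are all consistent with the corresponding points in the body of the paper.
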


This corollary is parallel to the results of~\cite{FrKr} (where $B=[0,1]$, which is not allowed in our case) and~\cite{KTnodal} (where the dimension of $B$ is arbitrary but the codimension is one).
Hence in~\cite{FrKr}, and also in~\cite{GrJe, JerDiam, Jer}, the operator corresponding to $H_0$ is an operator on a closed interval $I\subset \R$ with Dirichlet conditions. Thus all its eigenvalues are simple and their eigenfunctions attain Courant's bound, which then forces the eigenfunctions of the operator on $M$ to attain the bound as well. For a general base manifold this is of course not always the case. However the argument still applies to the point that if $\psi$ attains Courant's bound, then so must $\varphi$. This can be used to construct many examples of eigenfunctions that do so.
\subsection{\textbf{The perturbed metric $G_\eps$}}\label{sect:pert}
In this section we specify the exact form of the perturbed metric $G_\eps$ that we consider, and translate this into conditions on $\eps H_1$. For $0<\eps< 1$ let $G_\eps$ be a family of Riemannian metrics on $M$ that satisfies
\begin{equation*}
\abs{\big(G_\eps - g_\eps\big)(v,v)}\leq \eps^{-1}\pi^*g_B(v,v)\,,
\end{equation*}
for all $v\in TM$. In particular this means that the difference of these metrics vanishes on $TF$ and for horizontal vectors it is bounded by $\eps g_\eps$. Note also that by polarisation we have for every $w\in TF$ and $v\in TM$
\begin{equation*}
 \abs{\big(G_\eps - g_\eps\big)(v,w)}
 \leq\tfrac12 \eps^{-1}g_B(\pi_*v, \pi_* v)\,.
\end{equation*}
This implies vanishing of the left hand side of the inequality by scaling $w$, so the space of horizontal vectors does not depend on $\eps$.
Additionally we assume that all covariant derivatives (with respect to $g_\eps$) of the difference are bounded by $\eps g_\eps$.
For a detailed discussion of how such metrics arise from embeddings we refer to~\cite{HLT}. 
In order to bring $-\Delta_{G_\eps}$ into the form $-\Delta_{g_\eps} + \eps H_1$ we define a local unitary
\begin{equation*}
  U_\rho{:}\, L^2(M, G_\eps)\to L^2(M, g_\eps)\,,\qquad 
f\mapsto \sqrt{\frac{\mathrm{vol}_{G_\eps}}{\mathrm{vol}_{g_\eps}}}f=:\sqrt{\rho}f\,.
\end{equation*}
Transformation by this unitary gives (see e.g.~\cite[lemma 1]{WaTeConst} for details)
\begin{equation*}
- U_\rho \Delta_{G_\eps} U_\rho^* = -\Delta_{g_\eps} + \eps H_1
\end{equation*}
with $\eps H_1= \eps^3 S_\eps + V_\rho$, where
 \begin{equation*}\label{eq:Seps}
 S_\eps f=\eps^{-3} \divg_{g_\eps}\Big(\big(g_\eps - G_\eps\big)(\ud f, \cdot)\Big)
\end{equation*}
and
\begin{equation*}\label{eq:Vrho}
 V_\rho=\tfrac12 \divg_{g_\eps}\grad_{G_\eps}(\log \rho) + \tfrac14 G_\eps(\ud \log \rho, \ud \log \rho)\,.
\end{equation*}
Note that $S_\eps{:}\, W^2(M, g)\to L^2(M,g)$ is of order one since for $\xi \in T^*B$
\begin{equation}\label{eq:g diff}
\abs{\big(g_\eps - G_\eps\big)(\pi^*\xi, \pi^*\xi)} \leq \eps g_\eps(\pi^*\xi, \pi^*\xi)= \eps^3 g_B(\xi, \xi)
\end{equation}
and $g_\eps - G_\eps$ vanishes on vertical forms. As to the potential $V_\rho$, we clearly have $\rho=1 + \mathcal{O}(\eps)$ and thus
\begin{align*}
 V_\rho&=\tfrac12 \Delta_{g_\eps}\log \rho + \tfrac14 g_\eps(\ud \log \rho, \ud \log \rho) + \mathcal{O}(\eps^3)\\
&=\begin{aligned}[t]
   &\tfrac12 \Delta_{F}\log \rho + \tfrac14 g_F(\ud \log \rho, \ud \log \rho)+\tfrac12\eps^2 \Delta_h \log \rho\\
&+\tfrac14 \eps^2(\pi^*g_B)(\ud \log \rho, \ud \log \rho) + \mathcal{O}(\eps^3)
  \end{aligned}\\
&=\tfrac12 \Delta_{F}\log \rho + \tfrac14 g_F(\ud \log \rho, \ud \log \rho)+ \mathcal{O}(\eps^3)\,.
\end{align*}
Here the second term is of order $\eps^2$, while the first is of order $\eps$ in general. 
Note however that for $\partial F=\varnothing$
\begin{equation*}
 \int_{F_x} \Delta_{F}\log \rho \vol_{g_{F_x}} =0\,.
\end{equation*}
Thus, in this case, we find using regular perturbation theory and the fact that the ground state eigenfunction of $\Delta_{F_x}$ is constant, that the ground state eigenvalue
\begin{equation*}
 \Lambda_\eps(x):=\min \sigma(-\Delta_F + V_\rho\vert_{F_x})=\mathrm{Vol}(F_x)^{-1}\int_{F_x} V_\rho \vol_{g_{F_x}} +  \mathcal{O}(\eps^2)
\end{equation*}
 is of order $\eps^2$.
 This is also true for typical examples with $\partial M \neq 0$, such as those discussed in~\cite{FrKr, HLT, KTnodal}, where actually $\Delta_F \log \rho=\mathcal{O}(\eps^2)$ holds point-wise. All of our results will be valid also in more general situations if the following condition holds.
\begin{cond*}
Let $\phi \in \ker(-\Delta_{F_x} - \Lambda_0)$ be normalised. Then
\begin{equation}\label{eq:cond rho}
 \sup_{x\in B}\int_{F_x} \abs{\phi}^2 V_\rho\,\mathrm{vol}_{g_{F_x}}=\mathcal{O}(\eps^2)\,. 
\end{equation}
\end{cond*}
\subsection{\textbf{Adiabatic perturbation theory}}\label{sect:adiab}
We now summarise the results of~\cite{LT} insofar as they are relevant to our specific situation. These results are concerned with the adiabatic operator $H_\mathrm{a}$, which is essentially given by the projection of $H$ to $\ker(\Delta_F-\Lambda_0)$.
This operator is still inherently $\eps$-dependent and $H_0$ will be obtained as its leading order contribution.  
To be more precise, let $\phi_0{:}\, M \to \R$ be the unique function such that for every $x\in B$ the restriction $\phi_0\vert_{F_x}$ is the positive and normalised ground state of $-\Delta_F$ (with Dirichlet boundary conditions if $\partial M\neq \varnothing$).
Let $\Lambda_\eps(x)$ be the smallest eigenvalue of $-\Delta_{F} + V_\rho\vert_{F_x}$ (with Dirichlet boundary conditions if $\partial M\neq \varnothing$) and $P_{\Lambda_\eps}(x)$ be the corresponding spectral projection.
Define $\phi_\eps \in L^2(M)$ by
\begin{equation*}
 \phi_\eps\vert_{F_x}:=P_{\Lambda_\eps}(x)\phi_0/\norm{P_{\Lambda_\eps}(x)\phi_0}_{L^2(F, g_{F_x})}\,.
\end{equation*}
The difference between $\phi_0$ and $\phi_\eps$ is of order $\eps$ in $L^2(M,g)$, as can be seen  immediately from the formula~\eqref{eq:phi eps}.
Since $g_F$ and $V_\rho$ are smooth, $\phi_0$ and $\phi_\eps$ are actually smooth functions not only on every fibre, but on $M$ (see~\cite[appendix B.2]{Lam} for a detailed proof).

The adiabatic operator is then given by 
\begin{equation*}
 \big(H_\mathrm{a}\psi\big)(x):= \langle\phi_\eps, H \phi_\eps \pi^*\psi \rangle_{L^2(F_x)} 
\end{equation*}
for $\psi\in D(H_\mathrm{a})= W^2(B)$. In the following we will often not make the pullback $\pi^*$ of $\psi$ explicit in the notation, that is we treat $\phi_\eps \psi$ as a function on $M$, even though $\psi$ is actually a function on $B$.
For the situation we wish to study here the relevant result can be formulated as follows:
\begin{thm}[\cite{LT}]\label{thm:adiab}
Let $\nu_0 \leq \nu_1 \leq \dots$ and $\lambda_0\leq \lambda_1 \leq \dots$ denote the eigenvalues of $H_\mathrm{a}$ and $H$ respectively, repeated  according to multiplicity. For every $J\in \N$ there exist constants $C_J$ and $\eps_0>0$ such that for all $j\leq J$ and $\eps < \eps_0$
\begin{equation*}
 \abs{\nu_j - \lambda_j}\leq C_J \eps^{4}\,.
\end{equation*}
 If in addition $\dist\big(\nu_j, \sigma(H_\mathrm{a})\setminus\lbrace \nu_j \rbrace\big) \geq C_j \eps^2$ for some $j\leq J$, then given a normalised eigenfunction $\psi_\eps\in \ker(H_\mathrm{a} - \nu_j)$ there is $\varphi\in \ker(H - \lambda_j)$ such that
\begin{equation*}
\norm{\phi_0\psi_\eps- \varphi}^2_{W^1(M,g)}=\int_M \abs{\phi_0\psi_\eps - \varphi}^2 + g\big(\ud (\phi_0\psi_\eps - \varphi), \ud (\phi_0\psi_\eps - \varphi)\big)\,\mathrm{vol}_g =\mathcal{O}(\eps^2)\,.
\end{equation*}
\end{thm}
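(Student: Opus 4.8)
\emph{Proof strategy.} The plan is to recognise this as the specialisation to $H$ of the general adiabatic approximation of~\cite{LT}, and to reduce it to that result by casting $H$ into the required form and verifying the hypotheses. Write $H = \big(-\Delta_F + V_\rho - \Lambda_0\big) + \eps^2\big(-\Delta_h\big) + \eps^3 S_\eps$, the first summand acting fibrewise, $-\Delta_h$ being second order and horizontal, and $S_\eps{:}\, W^2(M,g)\to L^2(M,g)$ of order one by~\eqref{eq:g diff}. By smoothness of $g_F$ and $V_\rho$ and compactness of $B$, the fibre operator $-\Delta_{F_x}+V_\rho\vert_{F_x}$ has the simple eigenvalue $\Lambda_\eps(x)$ separated from the remainder of its spectrum by a gap bounded below uniformly in $x$ and, for $\eps$ small, in $\eps$; and Condition~\eqref{eq:cond rho} together with $V_\rho=\mathcal{O}(\eps)$ forces $\Lambda_\eps-\Lambda_0=\mathcal{O}(\eps^2)$, so that on the relevant low-energy subspace $H$ is $\eps^2$ times a bounded perturbation of $-\Delta_{g_B}$. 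In particular $H_\mathrm{a}=\eps^2\big(-\Delta_{g_B}+\mathcal{O}(1)\big)$, whose eigenfunctions $\psi_\eps$ for eigenvalues $\nu_j=\mathcal{O}(\eps^2)$ are bounded in $W^k(B)$ for every $k$, uniformly in $\eps$; and the fibrewise spectral projection $P^{(0)}_\eps$ onto $\phi_\eps$ is a smooth bundle endomorphism with $H_\mathrm{a}$ the conjugate of $P^{(0)}_\eps H P^{(0)}_\eps$ under the obvious unitary onto $L^2(B)$.

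For the estimates I would then recall the mechanism of~\cite{LT}. As $P^{(0)}_\eps$ commutes fibrewise with $-\Delta_F+V_\rho$, the only part of $H$ that couples $\operatorname{ran}P^{(0)}_\eps$ to its orthogonal complement is horizontal, so $(1-P^{(0)}_\eps)H P^{(0)}_\eps=(1-P^{(0)}_\eps)\big(\eps^2(-\Delta_h)+\eps^3 S_\eps\big)P^{(0)}_\eps$ is of order $\eps^2$ as an operator $W^2(M,g)\to L^2(M,g)$. A Nenciu-type iteration then corrects $P^{(0)}_\eps$ to a superadiabatic projection $P_\eps=P^{(0)}_\eps+\eps^2 P^{(1)}_\eps+\cdots$ with $[H,P_\eps]=\mathcal{O}(\eps^N)$ for $N$ as large as desired, the corrections being obtained by inverting $-\Delta_F+V_\rho-\Lambda_\eps$ on $(\operatorname{ran}P^{(0)}_\eps)^\perp$; one has to check that these stay bounded and carry the advertised powers of $\eps$, which uses fibrewise elliptic regularity and the boundedness of $V_\rho$ and of the order-one operator $S_\eps$. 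Since $\operatorname{ran}P_\eps$ is then almost invariant and separated by a gap of order $1$ from the rest of the spectrum, the standard gap argument produces a bijection, preserving eigenvalues up to $\mathcal{O}(\eps^N)$, between the low-lying spectrum of $H$ and that of the effective operator $M_\eps P_\eps H P_\eps M_\eps^*$ on $L^2(B)$ (with $M_\eps$ the natural unitary $\operatorname{ran}P_\eps\to L^2(B)$); identifying this with $H_\mathrm{a}$ up to $\mathcal{O}(\eps^4)$ — using $P_\eps-P^{(0)}_\eps=\mathcal{O}(\eps^2)$ and that $H$ is $\mathcal{O}(\eps^2)$ on these subspaces — yields $\abs{\nu_j-\lambda_j}=\mathcal{O}(\eps^4)$ for $j\le J$.

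For the eigenfunction claim, assume $\dist\big(\nu_j,\sigma(H_\mathrm{a})\setminus\{\nu_j\}\big)\ge C_j\eps^2$ and let $\psi_\eps$ be normalised with $H_\mathrm{a}\psi_\eps=\nu_j\psi_\eps$. Then $\phi_\eps\psi_\eps$ is an $\mathcal{O}(\eps^2)$-quasimode whose off-diagonal defect $(1-P^{(0)}_\eps)\big(\eps^2(-\Delta_h)+\eps^3 S_\eps\big)(\phi_\eps\psi_\eps)$ is removed by adding $\eps^2 w'$, where $w'$ is obtained from the resolvent of $-\Delta_F+V_\rho-\Lambda_\eps$ restricted to $(\operatorname{ran}P^{(0)}_\eps)^\perp$, hence smooth on $M$ and bounded uniformly since $\psi_\eps$ is; this gives $v=\phi_\eps\psi_\eps+\eps^2 w'$ with $\norm{(H-\lambda_j)v}=\mathcal{O}(\eps^3)$ and $\norm{v-\phi_\eps\psi_\eps}_{W^1(M,g)}=\mathcal{O}(\eps^2)$. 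The gap of order $\eps^2$, transferred up to a constant to $\lambda_j$ inside $\sigma(H)$ by the spectral correspondence, gives $\varphi\in\ker(H-\lambda_j)$ with $\norm{v-\varphi}_{L^2}=\mathcal{O}(\eps)$; projecting $\varphi$ by $P_\eps$ recovers it, up to $\mathcal{O}(\eps^N)$, as $\phi_\eps\psi_\eps$ plus a correction smooth on $M$ of size $\mathcal{O}(\eps^2)$ in $W^1(M,g)$, whence $\norm{\varphi-\phi_\eps\psi_\eps}_{W^1(M,g)}=\mathcal{O}(\eps^2)$. Since $\phi_0$ and $\phi_\eps$ differ by $\mathcal{O}(\eps)$ in $W^1(M,g)$ by the same fibrewise perturbation theory, this yields $\norm{\phi_0\psi_\eps-\varphi}_{W^1(M,g)}=\mathcal{O}(\eps)$, i.e.\ the stated $\mathcal{O}(\eps^2)$ for the squared norm.

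\emph{Main obstacle.} The delicate part is not this algebra but the functional-analytic bookkeeping it hides: because $H$ is a differential operator, naively commuting it with projections loses horizontal derivatives, so the superadiabatic iteration — and the $\mathcal{O}(\eps^4)$ comparison of the effective operator with $H_\mathrm{a}$ — has to be run in the graph norm of $H$ (equivalently on $W^2(M,g)$), with careful tracking of the $\eps$-weights carried by $-\Delta_h$, by $S_\eps$, and by the resolvent of the fibre operator on $(\operatorname{ran}P^{(0)}_\eps)^\perp$. This is precisely the analysis of~\cite{LT}, to which I would appeal for the quantitative estimates.
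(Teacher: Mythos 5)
Theorem~\ref{thm:adiab} is not proved in this paper at all: it is quoted verbatim from~\cite{LT}, and your proposal likewise ultimately defers to~\cite{LT} for the quantitative estimates, so you are taking essentially the same route; your sketch of the superadiabatic-projection machinery is a fair account of what that reference does. One small internal wrinkle: the intermediate claim $\norm{\varphi-\phi_\eps\psi_\eps}_{W^1(M,g)}=\mathcal{O}(\eps^2)$ sits uneasily with the preceding $\norm{v-\varphi}_{L^2}=\mathcal{O}(\eps)$, but since the theorem only needs $\mathcal{O}(\eps)$ in $W^1$ and the details live in~\cite{LT}, this does not affect the conclusion.
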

Since $(-\Delta_F + V_\rho - \Lambda_0)\phi_\eps=\mathcal{O}(\eps^2)$ by condition~\eqref{eq:cond rho} we can isolate the leading order of this operator, that is we have (see~\cite[chapter 3]{Lam})
\begin{equation}\label{eq:Ha}
H_\mathrm{a}=-\eps^2\Delta_{g_B} + \eps^2 V_\mathrm{eff} + \mathcal{O}(\eps^3)=:\eps^2 H_0 + \mathcal{O}(\eps^3)\,,
\end{equation}
 with 
 \begin{equation*}
  V_\mathrm{eff}= V_\mathrm{a} + \eps^{-2} (\Lambda_\eps - \Lambda_0)\,,
 \end{equation*}
and the adiabatic potential 
\begin{equation*}
V_\mathrm{a}= \tfrac12 \tr_{g_B}\big(\nabla^B\bar\eta\big) - \int_{F_x} \pi^*g_B\big(\grad_{g} \phi_0, \grad_{g} \phi_0\big) \,\mathrm{vol}_{g_{F_x}}\,,
\end{equation*}
where $\nabla^B$ is the Levi-Cività connection of $g_B$ and $\bar \eta$ is the average of the mean curvature $\eta$ of the fibres,
\begin{equation*}
 \bar\eta(X):=\int_{F_x} \abs{\phi_0}^2 g_B(\pi_*\eta, X)\,\mathrm{vol}_{g_{F_x}}\,.
\end{equation*}
Equation~\eqref{eq:Ha} defines the operator $H_0$ and the remainder is an operator of order $\eps^3$ in the norm of $\mathscr{L}\big(W^2(B,g_B),L^2(B,g_B)\big)$.\label{H0-Ha} Hence by standard perturbation theory the eigenvalues $\nu$ of $H_\mathrm{a}$ are given by $\eps^2 \mu + \mathcal{O}(\eps^3)$, where $\mu$ is an eigenvalue of $H_0$. In particular $\mu$ is simple, if and only if $\nu$ is simple and separated from the rest of $\sigma(H_\mathrm{a})$ by a gap of order $\eps^2$, as required in the second part of theorem~\ref{thm:adiab}. 

Now if $\mu \in \sigma(H_0)$ is a simple eigenvalue and $\psi\in \ker(H_0 - \mu)$, we can easily construct $\psi_\eps\in \ker(H_\mathrm{a}-\nu)$ such that
\begin{equation*}
\norm{\psi_\eps - \psi}_{W^2(B,g_B)}=\mathcal{O}(\eps)\,,
\end{equation*}
which for $\dim B\leq 3$ implies 
\begin{equation*}
 \norm{\psi_\eps - \psi}_\infty=\mathcal{O}(\eps)\,.
\end{equation*}
Hence in order to prove convergence of $\varphi$ to $\psi \phi_0$ it will be sufficient to prove convergence to $\psi_\eps \phi_0$. The key idea to prove the latter is that this difference satisfies an elliptic boundary value problem on $M$. More precisely, note that both $\phi_0\psi_\eps$ and $\varphi$ are smooth functions on $M$ that vanish on the boundary and that additionally
\begin{equation*}
\big(H-\lambda\big)(\psi_\eps \phi_0 - \varphi)=\big(H-\lambda\big)\psi_\eps \phi_0\,.
\end{equation*}
Using that $\phi_0$ and $\psi_\eps$ are eigenfunctions of explicit elliptic operators on $F$ and $B$ respectively, one shows rather easily that the right hand side of the equation is small, not only in $L^2(M,g)$ but uniformly. Together with the $W^1$-estimate on the difference given by theorem~\ref{thm:adiab} this will imply smallness of $\norm{\phi_0\psi_\eps - \varphi}_\infty$ (see lemma~\ref{lem:phi eps}).
\begin{rem}
 If we use $\phi_0$ instead of $\phi_\eps$ in the definition of $H_\mathrm{a}$, i.e.~we set
\begin{equation*}
\big(\tilde H_\mathrm{a}\psi\big)(x):= \langle\phi_0, H \phi_0 \psi \rangle_{L^2(F_x)}\,,
\end{equation*}
we obtain the expression
\begin{equation}\label{eq:Ha-0}
 \tilde H_\mathrm{a}= -\eps^2\Delta_{g_B} + \eps^2 V_\mathrm{a} + \int_{F_x} \abs{\phi_0}^2 V_\rho \vol_{g_F} + \mathcal{O}(\eps^3)\,.
\end{equation}
This amounts to calculating $\Lambda_\eps - \Lambda_0$ in first order perturbation theory, so if $V_\rho=\mathcal{O}(\eps^2)$, then $H_\mathrm{a}= \tilde H_\mathrm{a} +\mathcal{O}(\eps^3)$.
On the other hand, if $V_\rho$ is only of order $\eps$, the second order of perturbation theory is of order $\eps^2$ and the operators $H_\mathrm{a}$ and $\tilde H_\mathrm{a}$ differ at leading order.

Note also that for the calculation of $V_\mathrm{a}$ it makes no difference whether we use $\phi_0$ or $\phi_\eps$ since 
$\phi_\eps - \phi_0=\mathcal{O}(\eps)$ as an element of $\mathscr{C}^1(B, L^2(F))$ by formula~\eqref{eq:phi eps} and~\cite[lemma 3.9]{LT}.
\end{rem}
\subsection{\textbf{A regularity lemma}}
Here we establish an elliptic regularity estimate for $-\Delta_{g_\eps}$ that takes into account the explicit $\eps$-dependence of the operator. In this, care needs to be taken since, written in a fixed system of local coordinates, the family $\lbrace-\Delta_{g_\eps}:0<\eps<1 \rbrace$ is not uniformly elliptic. For this reason we will choose $\eps$-dependent coordinate systems, basically $g_\eps$-geodesic coordinates, in which the local expressions for $H$ give a uniformly elliptic family of differential operators on some ball in $\R^m$.

The lemma will also be useful in other contexts, so we prove it in greater generality than required. In particular only for this section we will not assume $M$ to be compact.
Rather we assume that $\pi{:}\,M\to B$ is a fibre bundle of manifolds with boundary with compact fibre $F$, that the boundary of $B$ is empty and that $M$ carries a family of metrics of the form~\eqref{eq:geps} such that $(M, g_\eps)$ is of bounded geometry in the sense of Schick~\cite{Schi}, uniformly in $\eps$. This means that there exists $r>0$ such that for every $\eps$ there is an atlas $\mathfrak{U}^\eps:=\lbrace \kappa_j^\eps{:}\, U_j^\eps \to \R^m: j\in \mathbb{Z} \rbrace$ (denoting $m:=\dim M$)
of $M$ with the following properties:
\begin{itemize}
 \item For $j\geq 0$, $\kappa_j^\eps{:}\, U_j^\eps \to B(r, 0)$ is given by a system of $g_\eps$-geodesic coordinates centred at some point $x_j\in M$ with radius $r$.
\item For $j<0$, $\kappa_j^\eps{:}\, U_j^\eps \to B(r, 0)\times [0, r)$ is a boundary collar map, i.e~it extends a geodesic coordinate system $\beta$ on $(\partial M, g_\eps\vert_{\partial M})$ along the inward pointing normal $\nu$ of the boundary via $(\kappa_j^\eps)^{-1}(v,s)=\exp_{\beta^{-1}(v)}(s \nu)$.
\item The sets $(\kappa_j^\eps)^{-1}\big(B(2r/3 , 0)\big)$ for $j
\geq 0$ and $(\kappa_j^\eps)^{-1}\big(B(2r/3, 0)\times [0, 2r/3)\big)$ if $j<0$ form an open cover of $M$.
\item The coefficients of the metric tensor $(g_\eps)_{kl}$ and its dual $(g_\eps)^{kl}$ in these coordinate systems are bounded with all their derivatives, uniformly in $j$ and $\eps$.
\end{itemize}

For a compact manifold with the adiabatic family of metrics $g_\eps$ given by~\eqref{eq:geps} this is always satisfied because all of the quantities associated with $g_\eps$ that need to be bounded, such as curvatures and injectivity radii,  only become better as $\eps$ decreases (see~\cite[appendix A]{Lam}).

The proof of the lemma relies on a generalised maximum principle~\cite[theorem 10]{PW}, that was also used in earlier works~\cite{FrKr, GrJe, JerDiam, Jer} on this topic.
\begin{thm*}[The generalised maximum principle]
Let $\Omega\subset \mathbb{R}^k$ be a bounded domain. Let $D$ be a uniformly elliptic\footnote{Our convention is that $-\sigma(D)(\xi)\geq e\abs{\xi}^2$, $e>0$, where the symbol is defined by the relation $\sigma(D)(\ud f)=[[D,f],f]$.} 
operator of second order with coefficients in $\mathscr{C}^\infty(\overline\Omega)$. If $u, w\in \mathscr{C}^2(\Omega)\cap \mathscr{C}^0(\overline \Omega)$ satisfy the differential inequalities
\begin{align*}
&D u\geq 0\\
&D w\leq 0
\end{align*}
in $\Omega$ and $u>0$ in $\overline \Omega$ , then $w/u$ cannot attain a non-negative maximum in $\Omega$, unless it is constant. 
\end{thm*}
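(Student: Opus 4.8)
The plan is to reduce the statement to the classical strong maximum principle by conjugating $D$ with the positive function $u$: this replaces the (a priori sign-indefinite) zeroth-order part of $D$ by one with the favourable sign.

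First I would set $v:=w/u$. Since $u\in\mathscr{C}^2(\Omega)\cap\mathscr{C}^0(\overline\Omega)$ is strictly positive on the compact set $\overline\Omega$, it is bounded below by a positive constant, so $1/u$ has the same regularity and $v\in\mathscr{C}^2(\Omega)\cap\mathscr{C}^0(\overline\Omega)$. Writing $D=\sum_{ij}a^{ij}\partial_i\partial_j+\sum_i b^i\partial_i+c$ in coordinates, with $(a^{ij})$ symmetric and, by the sign convention in the footnote, uniformly negative definite on $\overline\Omega$, the product rule gives the identity
\begin{equation*}
 D(uv)=(Du)\,v+u\sum_{ij}a^{ij}\partial_i\partial_j v+\sum_k\Big(b^k+\frac{2}{u}\sum_i a^{ik}\partial_i u\Big)\partial_k v\,.
\end{equation*}
Hence the conjugated operator $D^{\sharp}v:=u^{-1}D(uv)$ has the form $D^{\sharp}v=\sum_{ij}a^{ij}\partial_i\partial_j v+\sum_k\tilde b^{\,k}\partial_k v+c^{\sharp}v$ with $c^{\sharp}:=u^{-1}Du$; it has the same principal part as $D$, hence is uniformly elliptic with the same convention, has continuous lower-order coefficients, and, crucially, has zeroth-order coefficient $c^{\sharp}\ge 0$ by the first differential inequality $Du\ge 0$.

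Next I would observe that the second differential inequality gives $D^{\sharp}v=u^{-1}Dw\le 0$ in $\Omega$. Thus $-D^{\sharp}$ has positive-definite principal part, continuous lower-order terms and non-positive zeroth-order coefficient $-c^{\sharp}\le 0$, and it satisfies $(-D^{\sharp})v\ge 0$. The classical strong maximum principle for such operators on the connected domain $\Omega$ (as in Gilbarg and Trudinger, Theorem~3.5) then shows that $v$ cannot attain a non-negative maximum at an interior point unless $v$ is constant, which is the assertion.

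I do not expect a genuine obstacle here; the only thing that needs care is the sign bookkeeping. The division by $u$ is harmless precisely because $u$ is bounded away from zero on $\overline\Omega$, and the entire content of the argument is that the conjugated operator acquires the zeroth-order coefficient $Du/u$, whose non-negativity is \emph{exactly} the hypothesis $Du\ge 0$; this is what makes the classical maximum principle, which in general requires a sign condition on the zeroth-order term, applicable without any hypothesis on $c$ itself. Likewise, the restriction to a \emph{non-negative} maximum in the conclusion is inherited verbatim from Hopf's strong maximum principle and boundary point lemma and cannot be dropped (the case of vanishing maximum is covered since Hopf's lemma allows the boundary value $0$).
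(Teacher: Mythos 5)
Your argument is correct and is essentially the standard proof of this result: the paper does not prove the theorem but quotes it from Protter--Weinberger \cite[theorem 10]{PW}, where it is established by exactly this device of conjugating by $u$ so that the zeroth-order coefficient of $u^{-1}D(u\,\cdot\,)$ becomes $u^{-1}Du\geq 0$ and the classical strong maximum principle applies to $v=w/u$. One small slip: in your displayed identity for $D(uv)$ the first-order term should read $\sum_k\big(u\,b^k+2\sum_i a^{ik}\partial_i u\big)\partial_k v$ (a factor $u$ is missing); this does not affect the argument, since after dividing by $u$ the coefficients $\tilde b^{\,k}$ are in any case continuous, hence locally bounded, on $\Omega$, which is all the interior maximum principle requires.
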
 
Additionally we use the following well-known corollary (see e.g.~\cite[Corollary 3.14]{Lam} for a proof):
\begin{cor}\label{cor:positivity}
Let $\Omega= B(r,0)$ and let $D^0$ denote the operator $D$ with Dirichlet boundary conditions. Assume $D^0$ is self-adjoint and that ${w\in W^{1}(\Omega)\cap \mathscr{C}^0(\overline \Omega)}$ is strictly positive on $\partial \Omega$. Then if $\lambda<  \min \sigma (D^0)$, the unique solution $u \in \mathscr{C}^\infty(\Omega)\cap \mathscr{C}^0 (\overline \Omega)$ of the boundary value problem
\begin{align*}
D u  = \lambda u&\qquad \text{in}\quad \Omega\, ,\\
u=w  &\qquad\text{on}\quad \partial \Omega
\end{align*}
is strictly positive.
\end{cor}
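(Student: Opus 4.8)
The plan is to prove positivity by comparison with a strictly positive supersolution, using the generalised maximum principle quoted above. First I would dispose of existence and regularity: since $\lambda<\min\sigma(D^0)$ we have $\lambda\notin\sigma(D^0)$, so writing $u=w+v$ the correction $v\in W^1_0(\Omega)$ is the unique solution of $(D-\lambda)v=-(D-\lambda)w$, which exists because $D-\lambda$ is an isomorphism from $W^1_0(\Omega)$ onto its dual. Interior elliptic regularity (the coefficients are smooth on $\overline\Omega$) gives $u\in\mathscr{C}^\infty(\Omega)$, and since $\partial\Omega=\partial B(r,0)$ is smooth and $w\in\mathscr{C}^0(\overline\Omega)$, standard barrier arguments give $u\in\mathscr{C}^0(\overline\Omega)$ with $u\vert_{\partial\Omega}=w$; this is the unique such solution.

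For the positivity itself the key object is a function $h$ that is strictly positive on all of $\overline\Omega$ (including the boundary, unlike the Dirichlet ground state on $\Omega$ itself) and satisfies $(D-\lambda)h\geq 0$ in $\Omega$. I would produce it by enlarging the ball slightly to $\Omega'=B(r',0)$ with $r'>r$: since $D^0$ is self-adjoint the differential expression $D$ is formally self-adjoint, so its Dirichlet realisation on $\Omega'$ is self-adjoint, and the bottom $\mu_1(\Omega')$ of its spectrum tends to $\mu_1(\Omega)=\min\sigma(D^0)>\lambda$ as $r'\downarrow r$; hence for $r'$ close enough to $r$ one still has $\mu_1(\Omega')>\lambda$. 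Taking $h$ to be the simple, strictly positive Dirichlet ground state of $D$ on $\Omega'$ then gives $h>0$ on $\overline\Omega\subset\Omega'$ and $(D-\lambda)h=(\mu_1(\Omega')-\lambda)h\geq 0$ in $\Omega$.

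Finally I would run the comparison argument. Writing $\tilde D:=D-\lambda$, which is again uniformly elliptic with smooth coefficients, our solution satisfies $\tilde D u=0$, hence $\tilde D(-u)\leq 0$, while $\tilde D h\geq 0$ and $h>0$ on $\overline\Omega$. If $u$ failed to be strictly positive, then since $u\vert_{\partial\Omega}=w>0$ there would be an interior point $x_0\in\Omega$ with $u(x_0)\leq 0$; the continuous function $-u/h$ would then be $\geq 0$ at $x_0$ but strictly negative on $\partial\Omega$, so its maximum over $\overline\Omega$ would be non-negative and attained at an interior point. Applying the generalised maximum principle with the positive factor $h$ and with $-u$ in the role of the subsolution $w$ forces $-u/h$ to be constant, and the boundary values make this constant strictly negative — contradicting $(-u/h)(x_0)\geq 0$. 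Hence $u>0$ in $\Omega$, and together with $u\vert_{\partial\Omega}=w>0$ this gives $u>0$ on $\overline\Omega$.

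I expect the only delicate point to be the soft input from scalar elliptic theory that I am quoting rather than the maximum-principle step itself, namely the continuity of the principal Dirichlet eigenvalue under the shrinking family of balls $\Omega'\downarrow\Omega$; this is precisely what upgrades "positive in the open set" to the strictly positive supersolution on the \emph{closed} ball $\overline\Omega$ required by the hypotheses of the generalised maximum principle. The other thing to keep straight is the bookkeeping of signs and of which function plays the role of the positive factor in that principle, since it is stated for the quotient $w/u$ with $u$ positive.
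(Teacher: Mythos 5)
Your argument is correct, and since the paper only cites \cite[Corollary 3.14]{Lam} for this statement rather than proving it in the text, I can only compare against the standard route. Your comparison-function strategy is exactly the right way to feed the generalised maximum principle: the principle needs a denominator that is strictly positive on the \emph{closed} ball, and enlarging the domain to $B(r',0)$ and taking the principal Dirichlet eigenfunction there is a clean way to manufacture one, with the eigenvalue condition $\mu_1(\Omega')>\lambda$ secured by domain monotonicity and continuity as $r'\downarrow r$. The sign bookkeeping in the final step is right: with $\tilde D=D-\lambda$ (still uniformly elliptic, and Protter--Weinberger's Theorem 10 imposes no sign condition on the zeroth-order term) you have $\tilde Dh\ge 0$, $h>0$ on $\overline\Omega$, $\tilde D(-u)\le 0$, and an interior non-negative maximum of $-u/h$ forces constancy, contradicting the strictly negative boundary values. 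Two points you quote without proof deserve a word each: (i) to even define $D$ on $\Omega'$ you must extend the coefficients beyond $\overline\Omega$ preserving smoothness, ellipticity and formal self-adjointness (write $D$ in divergence form plus potential and extend each coefficient); (ii) strict positivity of the principal Dirichlet eigenfunction on $\Omega'$ is itself a maximum-principle/Harnack fact, so your proof is not fully self-contained modulo only the quoted theorem. If you want to avoid the enlarged ball altogether, there is a more elementary variational route: $u_-:=\min(u,0)$ lies in $W^1_0(\Omega)$ because $u>0$ on $\partial\Omega$, and testing $(D-\lambda)u=0$ against $u_-$ gives $\langle u_-,(D-\lambda)u_-\rangle=0$, which the strict positivity of $D^0-\lambda$ forces to mean $u_-\equiv 0$; the strong maximum principle then upgrades $u\ge 0$, $u\not\equiv 0$ to $u>0$ in $\Omega$. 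Either way the corollary holds; your version trades that variational step for the domain-perturbation facts about $\mu_1$.
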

\begin{lem}\label{lem:max}
Assume $(M, g_\eps)$ is a manifold of bounded geometry uniformly in $\eps$.
Let $\lambda(\eps)\geq 0$ with $\lim_{\eps \to 0} \lambda(\eps) =0$. If $f\in \mathscr{C}^2(M)$ is a solution of 
\begin{equation}\label{eq:Delta f}
 (H - \lambda(\eps))f=\delta\,,\qquad f\vert_{\partial M} =0\,.
\end{equation}
with $\norm{\delta}_\infty =\mathcal{O}(\eps)$, then there are positive constants $C$, $R$ and $\eps_0$ such that for every $x\in M$ and $\eps< \eps_0$
\begin{equation}\label{eq:f bound}
\abs{f(x)}\leq C \Bigg(\bigg( \int\limits_{\Omega(x)} \abs{f}^2 + g_\eps(\ud f, \ud f) \,\vol_{g_\eps} \bigg)^{1/2} +  \eps\Bigg)\,,
\end{equation}
where $\Omega(x):=\lbrace y\in M: \dist_{g_B}(\pi(y), \pi(x))<\eps R\rbrace$.
\end{lem}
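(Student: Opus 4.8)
The plan is to reduce the estimate to a local one in coordinates in which $H-\lambda(\eps)$ is \emph{uniformly} elliptic, to absorb the inhomogeneity $\delta$ by a comparison argument based on the generalised maximum principle, and to control the remaining homogeneous part by a standard local boundedness estimate for solutions of elliptic equations. The point throughout is that every constant must be uniform in $\eps$, and this is exactly what forces one to work in the $\eps$-dependent geodesic charts supplied by the uniform bounded geometry hypothesis rather than in a fixed coordinate system, in which $-\Delta_{g_\eps}$ degenerates as $\eps\to 0$.

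First I would fix such an atlas $\mathfrak{U}^\eps$. In an interior chart $\kappa_j^\eps{:}\,U_j^\eps\to B(r,0)$ — and analogously in a boundary collar, with $B(r,0)$ replaced by a half-ball and $f$ vanishing on the part of the boundary lying in $\partial M$ — the operator $H-\lambda(\eps)$ becomes a second order operator $\tilde D_\eps$ whose ellipticity constant and coefficients are bounded uniformly in $\eps$ and $j$: the principal part of $-\Delta_{g_\eps}+\eps^3S_\eps$ is that of $-\Delta_{G_\eps}$, with $G_\eps$ uniformly comparable to $g_\eps$, while $V_\rho$, $\Lambda_0$, $\lambda(\eps)$ and the lower order part of $\eps^3S_\eps$ are bounded uniformly. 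Shrinking the fixed radius $r$ if necessary I would moreover ensure $\min\sigma(\tilde D_\eps^0)>0$ uniformly in $\eps$, where $\tilde D_\eps^0$ is the Dirichlet realisation on $B(r,0)$: the bottom of the Dirichlet spectrum of $-\Delta_{g_\eps}$ on a $g_\eps$-geodesic ball of radius $r$ is bounded below by a positive constant depending only on $r$ and the geometry bounds and diverging as $r\to 0$, so for $r$ small it dominates the bounded contributions of $V_\rho$, $-\Lambda_0$, $\lambda(\eps)$ and the lower order part of $\eps^3 S_\eps$. Since $\lambda(\eps)\to 0$, this is precisely the situation to which Corollary~\ref{cor:positivity} (and its half-ball analogue) applies for all $\eps<\eps_0$.

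On a single chart $\Omega$ I would then split $f=f_0+f_1$, where $f_0$ solves $\tilde D_\eps f_0=\delta$ with $f_0=0$ on $\partial\Omega$ and $f_1$ solves $\tilde D_\eps f_1=0$ with $f_1=f$ on $\partial\Omega$ (in a boundary chart both vanish on the $\partial M$-portion of $\partial\Omega$). Using Corollary~\ref{cor:positivity} one constructs $u$ with $c\leq u\leq C$, $\tilde D_\eps u=1$ in $\Omega$ and $u\equiv 1$ on $\partial\Omega$. Choosing $C_1$ with $\norm{\delta}_\infty\leq C_1\eps$ one has $\tilde D_\eps(\pm f_0-C_1\eps\,u)=\pm\delta-C_1\eps\leq 0$ and $\pm f_0-C_1\eps\,u=-C_1\eps<0$ on $\partial\Omega$, so the generalised maximum principle — applied against the positive function $u$, for which $\tilde D_\eps u=1\geq 0$ — forces $\pm f_0<C_1\eps\,u$, i.e.~$\norm{f_0}_{L^\infty(\Omega)}\leq C\eps$, and hence $\norm{f_0}_{L^2(\Omega)}\leq C\eps$ since $\Omega$ has uniformly bounded volume. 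For the homogeneous part the interior — and, in boundary charts, boundary — local boundedness estimate for solutions of uniformly elliptic equations gives, with a uniform constant, $\sup_{B(2r/3,0)}\abs{f_1}\leq C\norm{f_1}_{L^2(\Omega)}\leq C(\norm{f}_{L^2(\Omega)}+\eps)$.

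Finally I would assemble the estimate. Given $x\in M$, choose a chart with $x$ in its core, $\kappa_j^\eps(x)\in B(2r/3,0)$; since a $g_\eps$-path projects to a $g_B$-path shorter by the factor $\eps$, every $y$ in this chart satisfies $\dist_{g_B}(\pi(y),\pi(x))\leq\eps\,\dist_{g_\eps}(y,x)\leq C\eps r$, so the chart is contained in $\Omega(x)$ once $R$ is taken to be a fixed multiple of $r$. Evaluating $f=f_0+f_1$ at $\kappa_j^\eps(x)$ and using the two bounds above gives $\abs{f(x)}\leq C(\norm{f}_{L^2(\Omega)}+\eps)$; and since $g_\eps$-geodesic coordinates distort the volume density by uniformly bounded factors, $\norm{f}_{L^2(\Omega)}^2\leq C\int_{U_j^\eps}\abs{f}^2\vol_{g_\eps}\leq C\int_{\Omega(x)}\abs{f}^2+g_\eps(\ud f,\ud f)\,\vol_{g_\eps}$, which is~\eqref{eq:f bound}. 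The main obstacle is keeping every constant uniform in $\eps$: once the problem is set up in the $g_\eps$-geodesic charts, both the elliptic estimates and the lower bound on $\min\sigma(\tilde D_\eps^0)$ — the two facts that make the maximum principle effective — become uniform, and the only remaining work is the bookkeeping for the boundary charts and the passage between the coordinate and the intrinsic $L^2$ norm.
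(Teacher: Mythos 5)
Your argument is correct, but it is organised differently from the paper's. The paper does not split $f$ into an inhomogeneous and a homogeneous part; instead it works with $f_\pm$ on $\supp f_\pm$, absorbs $\delta$ by the shift $f_+\mapsto f_+ +\eps$ (so that $(H-\lambda-K)(f_++\eps)\leq 0$ for a suitable $K$ with $\norm{\delta}_\infty+\norm{V_\rho}_\infty\leq K\eps$), and compares with a \emph{single} barrier $u$ solving $D_x^\eps u=2Ku$ with boundary data $\kappa^\eps_*f_++\eps$ on the coordinate ball; the generalised maximum principle applied to the quotient $(w+\eps)/u$ on $\kappa^\eps(\supp f_+\cap U^\eps)\cap B(r_1,0)$ gives $f_+(x)\leq u(0)$, and $u(0)$ is then bounded by $C(\norm{w}_{W^1}+\eps)$ via the $W^1$ a priori estimate for the boundary value problem, interior $W^k$ regularity and Sobolev embedding --- this is precisely why the gradient term appears on the right of~\eqref{eq:f bound}. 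Your decomposition $f=f_0+f_1$ buys something: the barrier argument disposes of $\delta$ with $\norm{f_0}_\infty\leq C\eps$, and the local boundedness estimate for the homogeneous part $f_1$ needs only $\norm{f_1}_{L^2}$, so you obtain a bound by $C(\norm{f}_{L^2(\Omega(x))}+\eps)$ with no gradient term, which implies~\eqref{eq:f bound}; the price is that you must invoke up-to-the-boundary local boundedness in the collar charts, where the paper instead extends the coefficients and uses one interior argument. Two small points to tighten: your comparison function with $\tilde D_\eps u=1$, $u=1$ on $\partial\Omega$ is not literally covered by Corollary~\ref{cor:positivity} (which treats $Du=\lambda u$), though its positivity follows by the same quotient argument against the harmonic extension of the boundary data; and the uniform lower bound $u\geq c$ you assert is never needed --- only $u>0$ (to divide by it) and $\sup u\leq C$ (to conclude $\abs{f_0}\leq C_1\eps\sup u$) enter the argument.
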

\begin{proof}
 We prove the statement for the positive part $f_+$ of $f$, the proof for the negative part is identical. 
Let $K>0$ be a constant such that $\norm{\delta}_\infty + \norm{V_\rho}_\infty\leq K\eps$ for $\eps$ small enough and note that in the interior of $\Omega_+:=\supp f_+$ we have
\begin{equation}\label{eq:Hdelta ineq}
\left(H -\lambda(\eps) - K\right)\left(f_+ + \eps\right)= \underbrace{\delta - \eps K + \eps V_\rho}_{\leq 0} - K f_+ - \eps(\lambda(\eps)+ \Lambda_0) \leq 0\,.
\end{equation}
We now aim at constructing a function $u$, defined on a neighbourhood of $x$, with $u\geq f_+$, but bounded by the integral in the statement of the lemma.
This will be achieved by choosing $u$ as the solution of an elliptic boundary value problem and then using the maximum principle.

We will now work in the atlas $\mathfrak{U}^\eps$ introduced in the introduction to this section. The uniform estimates on geometric quantities expressed in these coordinates will make our locally obtained estimates hold uniformly on $M$.
The virtue of these $\eps$-dependent coordinate systems is that they mitigate (the leading order of) the $\eps$-dependence of $g_\eps$ since in geodesic coordinates this leading order is always given by the Euclidean metric. Since the bounded geometry of $(M, g_\eps)$ is uniform in $\eps$, we have uniform bounds on the expression $g_\eps^{kl}- \delta^{kl}$ and its derivatives in these coordinate systems. Moreover there exists an $\eps$-independent radius $r_0\leq r$ such that for every $x\in M$ and $0<\eps<1$ there is $j\in \mathbb{Z}$ for which the Euclidean ball $B\big(r_0, \kappa_j^\eps(x)\big)\subset \R^m$ (or $B\big(r_0, \kappa_j^\eps(x)\big)\cap \lbrace x_{m}\geq 0\rbrace$ if  $j<0$) is completely contained in the image of $\kappa_j^\eps$.

Now fix $x\in \Omega_+$, and let $\kappa^\eps{:}\,U^\eps \to \R^m$ be the coordinate system with the property above, shifted so that $\kappa^\eps(x)=0$. Set $D_x^\eps:=\kappa_*^\eps H$ and if $\kappa^\eps$ is a boundary chart extend this to an elliptic operator on $B(r_0,0)$ by smoothly extending its coefficients. This yields a family of elliptic operators that have common bounds $\lbrace e, c\rbrace$, independent of $x$ and $\eps$, on their ellipticity constants and coefficients.
Hence there exists a positive radius $r_1\leq r_0$ such that for all $x$ and $\eps$ we have a lower bound on the Dirichlet energy:
\begin{equation*}
\inf_{0\neq \phi \in W^{1}_0\big(B(r_1,0)\big)} 
\frac{\langle\phi, D_x^\eps \phi\rangle_{L^2(B(r,0))}}{\langle \phi, \phi \rangle} \geq 3K \,.
\end{equation*}
Now $w=\kappa^\eps_*f_+$ (extended by zero if $\kappa^\eps$ is a boundary chart) defines a function in $W^1\big(B(r_1, 0)\big)\cap \mathscr{C}^0\big(\overline{B(r_1, 0)}\big)$ and the boundary value problem
\begin{equation}\label{eq:def u}
 D_x^\eps u = 2 K u\,,\qquad	u\vert_{\partial B(r_1, 0)}=w + \eps
\end{equation}
has a unique solution $u\in \mathscr{C}^\infty(B(r_1,0))\cap \mathscr{C}^0\big(\overline{B(r_1, 0)}\big)$(see~\cite[chapter 8]{GT}), which is strictly positive by corollary~\ref{cor:positivity}.
Positivity of $u$ entails that $(D_x^\eps -K)u=Ku>0$, while $(D_x^\eps -K)\eps=\kappa_*^\eps V_\rho\eps -\Lambda_0\eps  - K\eps \leq 0$. Hence by the maximum principle
\begin{equation}\label{eq:u lb}
0< \sup_{y\in B(r_1, 0)} \frac{\eps}{u} = \max_{y\in \partial B(r_1, 0)}\frac{\eps}{u} \leq 1\,.
\end{equation}
Now for $\eps$ small enough we have $\lambda(\eps) <K$ and hence
\begin{equation*}
(D_x^\eps - \lambda - K)u=(K - \lambda)u>0
\end{equation*}
in $B(r_1, 0)$, while $w$ satisfies
\begin{equation*}
(D_x^\eps - \lambda - K)(w+\eps)\stackrel{\eqref{eq:Hdelta ineq}}{\leq} 0  
\end{equation*}
in $\kappa^\eps(\Omega_+ \cap U^\eps)$. Thus by the maximum principle the function $(w+\eps)/u$, defined on $\kappa^\eps(\Omega_+ \cap U^\eps)\cap B(r_1, 0)$ attains its maximum on the boundary of this set. On the boundary of $B(r_1, 0)$ the quotient equals one by~\eqref{eq:def u}, while on the boundary of $\kappa^\eps(\Omega_+)$ we have $w\equiv 0$ and $(w + \eps)/u =\eps/u\leq 1$ by~\eqref{eq:u lb}.
Consequently
\begin{equation*}
(w+\eps)/u\leq 1
\end{equation*}
and in particular
\begin{equation*}
 f_+(x)=w(0)\leq u(0)\,.
\end{equation*}
The rest of the proof consists in bounding $u(0)$ by the right hand side of~\eqref{eq:f bound}. To start with, we have the a priori estimate~\cite[corollary 8.7]{GT} (with $C= C(K, r_1, e, c)$) 
\begin{equation*}
\norm{u}_{W^{1}(B(r_1,0))} \leq C \big(\lVert w \rVert_{W^{1}(B(r_1,0))}+\eps \big)\,.
\end{equation*}
For higher Sobolev norms interior elliptic regularity~\cite[theorem 8.10]{GT} gives
\begin{equation*}
\norm{u}_{W^{k}(B(r_1/2,0))}\leq C(k, r_1, e , c, K) \norm{u}_{W^{1}(B(r_1,0))}\,,
\end{equation*}
since $u$ is an eigenfunction of $D_x^\eps$.
If we take $k>(m +1) /4$ the Sobolev embedding theorem gives a bound on $\sup_{y\in B(r_1/2,0)} u(y)$ and in particular on $u(0)$.
Hence we have
\begin{equation*}
u(0)\leq C(m, r_1, e, c, K)\big(\norm{w}_{W^{1}(B(r_1,0))}  + \eps\big)\,.
\end{equation*}
Choose $R$ such that for every $x$ and every $0<\eps<1$, $(\kappa^\eps)^{-1}\big(B(r_1, \kappa^\eps(x)\big)$ is contained in the metric ball $\lbrace y \in M: \dist_{g_\eps}(x,y)< R\rbrace$.
Then there is a constant $C$, depending on the constants bounding the geometry of $(M, g_\eps)$, such that
\begin{equation*}
 \norm{w}_{W^{1}(B(r_1,0))} \leq C \norm{\kappa^{\eps*}w}_{W^{1}(\kappa^{-1}(B), g_\eps)}\,,
\end{equation*}
and we obtain
\begin{equation*}
f_+(x)\leq C  \Bigg(\bigg( \int\limits_{\Omega(x)} \abs{f}^2 + g_\eps(\ud f, \ud f) \,\vol_{g_\eps} \bigg)^{1/2} +  \eps\Bigg)\,,
\end{equation*}
which completes the proof.
\end{proof}
For a solution to an elliptic equation such as~\eqref{eq:Delta f} we naturally obtain estimates on derivatives. Since the family of operators $\Delta_{g_\eps}$, which dominates the behaviour of $H$, is associated with the metric $g_\eps$ such estimates hold for the function $f$ of lemma~\ref{lem:max} in the $\mathscr{C}^k$-norm corresponding to this metric. More precisely let for $f\in \mathscr{C}^k(M)$
\begin{equation*}
 \norm{f}_{\mathscr{C}^k(M, g_\eps)}:= \norm{f}_\infty + \sum_{j=1}^k \max_{x\in M} \sqrt{g_\eps(\nabla^j f, \nabla^j f)}\,,
\end{equation*}
where $\nabla^1=\ud$ denotes exterior derivation, higher derivatives are induced by the Levi-Cività connection of $g_\eps$ and the metric is canonically extended to the tensor bundles.
\begin{lem}\label{lem:reg}
Let $f\in \mathscr{C}^{2}(M)$ satisfy equation~\eqref{eq:Delta f}, then there exists a constant $C$ independent of $\eps$ such that
\begin{equation*}
\norm{f}_{\mathscr{C}^{2}(M, g_\eps)}\leq C\big( \norm{f}_\infty + \norm{\delta}_{\infty}\big)\,.
\end{equation*}
\end{lem}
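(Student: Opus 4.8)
The plan is to bootstrap from the already-established local estimates used in the proof of Lemma~\ref{lem:max}, only this time tracking the $\eps$-dependence that is hidden in the metric rather than in the coordinate systems. The key point is the same one exploited throughout the section: in the $\eps$-dependent atlas $\mathfrak{U}^\eps$ the operator $D_x^\eps = \kappa^\eps_* H$ is a uniformly elliptic second-order operator on a fixed Euclidean ball $B(r_1,0)$, with coefficients bounded together with all their derivatives uniformly in $x$ and $\eps$; and in these coordinates the Euclidean derivatives of a function agree, up to uniformly bounded factors coming from $g_\eps^{kl}-\delta^{kl}$ and its derivatives, with the $g_\eps$-covariant derivatives. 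So it suffices to produce Euclidean $\mathscr{C}^2$-bounds on $w=\kappa^\eps_* f$ in $B(r_1/2,0)$ in terms of $\norm{f}_\infty$ and $\norm{\delta}_\infty$.

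First I would fix $x\in M$ and pass to the chart $\kappa^\eps$ centred at $x$, extending coefficients smoothly across the boundary in the boundary-chart case exactly as before. In these coordinates $w$ solves $D_x^\eps w = \lambda(\eps) w + \tilde\delta$ on $B(r_1,0)$ (with $w$ vanishing on the flat boundary piece in the boundary case), where $\tilde\delta=\kappa^\eps_*\delta$ satisfies $\norm{\tilde\delta}_{L^\infty(B(r_1,0))}\leq C\norm{\delta}_\infty$ and $\abs{\lambda(\eps)}\leq 1$ for $\eps$ small. Then I would apply interior (and, in the boundary-chart case, boundary) elliptic $L^p$- or Schauder-type estimates on a slightly smaller ball: using e.g.\ \cite[Theorem 9.11 / Theorem 8.10]{GT}, $\norm{w}_{W^{2,p}(B(3r_1/4,0))}\leq C\big(\norm{w}_{L^p(B(r_1,0))} + \norm{\lambda w + \tilde\delta}_{L^p(B(r_1,0))}\big)\leq C'\big(\norm{f}_\infty+\norm{\delta}_\infty\big)$, the constants depending only on the uniform ellipticity constant $e$, the coefficient bound $c$, $r_1$ and $p$, hence not on $x$ or $\eps$. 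Iterating once more with the right-hand side now in $W^{1,p}$ gives a $W^{3,p}$-bound on $B(r_1/2,0)$, and choosing $p$ large enough that $W^{3,p}\hookrightarrow \mathscr{C}^2$ in dimension $m$ yields $\norm{w}_{\mathscr{C}^2(\overline{B(r_1/2,0)})}\leq C\big(\norm{f}_\infty+\norm{\delta}_\infty\big)$.

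Finally I would translate this back to $M$: the bounds on $g_\eps^{kl}-\delta^{kl}$ and the Christoffel symbols of $g_\eps$ in the atlas $\mathfrak{U}^\eps$ (part of the bounded-geometry-uniform-in-$\eps$ hypothesis, valid for $g_\eps$ given by~\eqref{eq:geps} by \cite[appendix A]{Lam}) give $\sqrt{g_\eps(\nabla^j f,\nabla^j f)}(y)\leq C\sum_{\abs{\alpha}\leq j}\abs{\partial^\alpha w}(\kappa^\eps(y))$ for $j=1,2$ and all $y$ in the $2r/3$-core of the chart, with $C$ uniform; since these cores cover $M$ and $x\in\kappa^\eps$-centre was arbitrary, taking the supremum over $x$ gives $\norm{f}_{\mathscr{C}^2(M,g_\eps)}\leq C\big(\norm{f}_\infty+\norm{\delta}_\infty\big)$. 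The only point requiring care — and the main (mild) obstacle — is the boundary case: one must make sure the smooth extension of the coefficients of $D_x^\eps$ past the flat boundary can be done with bounds uniform in $x$ and $\eps$ (this is already implicit in the proof of Lemma~\ref{lem:max}, where the same extension was performed) and that global boundary elliptic estimates up to the boundary, rather than merely interior ones, are invoked for the charts $\kappa^\eps_j$ with $j<0$; with the collar structure this is standard.
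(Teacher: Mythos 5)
Your overall strategy is exactly the paper's: the entire proof given there is to apply the local elliptic estimate of Gilbarg--Trudinger in the $\eps$-dependent atlas $\mathfrak{U}^\eps$, relying on the uniform ellipticity and uniform coefficient bounds of $D_x^\eps=\kappa^\eps_*H$ that you correctly identify, and then to translate Euclidean derivative bounds back into $g_\eps$-covariant ones using the uniform control of $g_\eps^{kl}-\delta^{kl}$. The paper cites \cite[lemma 6.4]{GT}, i.e.\ the interior Schauder estimate $\abs{u}^*_{2,\alpha}\leq C(\abs{u}_0+\abs{f}^{(2)}_{0,\alpha})$, rather than the $L^p$ theory.

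There is, however, a concrete gap in your bootstrap. From $D_x^\eps w=\lambda w+\tilde\delta$ with $\tilde\delta$ controlled only in $L^\infty$, the $W^{2,p}$ estimate \cite[Theorem 9.11]{GT} is fine, but the step ``iterating once more with the right-hand side now in $W^{1,p}$'' is not available: you have no bound on first derivatives of $\tilde\delta$, since the hypothesis of the lemma controls only $\norm{\delta}_\infty$. Without that iteration, $W^{2,p}$ with $p>m$ embeds only into $\mathscr{C}^{1,\alpha}$, not $\mathscr{C}^{2}$, so your argument as written does not reach the claimed second-derivative bound. The route that closes is the Schauder estimate the paper uses, which gives $\mathscr{C}^{2,\alpha}$ control directly from $\norm{w}_\infty$ and a H\"older bound on the inhomogeneity; note that even this strictly requires more than $\norm{\delta}_\infty$ (namely a uniform H\"older bound on $\delta$ in the charts), a point the paper itself glosses over but which holds in every application of the lemma, where $\delta$ is a smooth function of the form $(H-\lambda)\psi\phi_0$ with all derivatives controlled. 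Your remarks on the boundary charts and on converting Euclidean to covariant derivatives are correct and are indeed the only other points requiring care.
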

\begin{proof}
Apply the local version of the statement~\cite[lemma 6.4]{GT} in the atlas $\mathfrak{U}^\eps$.
\end{proof}
\subsection{\textbf{Uniform convergence of eigenfunctions}}
We are now ready to prove uniform convergence of eigenfunctions of $H$ to $\psi\phi_0$, where $\psi$ is an eigenfunction of $H_0$, under the appropriate conditions. We remind ourselves that here $M$ is compact again and $\phi_0 \in \mathscr{C}^\infty(M)$ is independent of $\eps$, so for any $k\in \N$, $\norm{\phi_0}_{\mathscr{C}^k(M,g)}\leq C(k)$. Since $\phi_\eps$ is a ground state of an operator whose coefficients are smooth and bounded independently of $\eps$ we also have $\norm{\phi_\eps}_{\mathscr{C}^k(M,g)}\leq C(k)$ for an appropriate choice of constants (c.f~\cite[appendix B]{Lam}). In fact since $\phi_\eps$ is the ground state of a smooth perturbation of $-\Delta_{g_F}$ the difference of $\phi_\eps$ and $\phi_0$ is of order $\eps$ in $\mathscr{C}^\infty(M,g)$. However we only make use of this estimate in the norm of $\mathscr{C}^0$, for which we give a simple proof here.
\begin{lem}\label{lem:phi eps}
Let $\phi_0$ and $\phi_\eps$ be the functions on $M$ defined in section~\ref{sect:adiab}. Then 
\begin{equation*}
 \norm{\phi_0 - \phi_\eps}_\infty=\mathcal{O}(\eps)
\end{equation*}
\end{lem}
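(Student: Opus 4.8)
The plan is to reduce everything to a fibrewise estimate and then use compactness of $B$ to make it uniform. Recall that $\phi_0\vert_{F_x}$ is the normalised Dirichlet ground state of $-\Delta_{F_x}$, i.e.\ $\phi_0\vert_{F_x}=P_{\Lambda_0}(x)\phi_0$, while $\phi_\eps\vert_{F_x}$ is the normalisation of $P_{\Lambda_\eps}(x)\phi_0$, where $P_{\Lambda_\eps}(x)$ is the spectral projection of $-\Delta_{F_x}+V_\rho\vert_{F_x}$ onto its (simple, for $\eps$ small) ground state. Hence $P_{\Lambda_\eps}(x)\phi_0-\phi_0=\big(P_{\Lambda_\eps}(x)-P_{\Lambda_0}(x)\big)\phi_0$, and since $\norm{\phi_0}_{\mathscr{C}^0(M)}$ is bounded it suffices to prove
\begin{equation*}
 \sup_{x\in B}\norm{P_{\Lambda_\eps}(x)-P_{\Lambda_0}(x)}_{L^\infty(F_x)\to L^\infty(F_x)}=\mathcal{O}(\eps)\,:
\end{equation*}
this gives $\norm{P_{\Lambda_\eps}(x)\phi_0-\phi_0}_{L^\infty(F_x)}=\mathcal{O}(\eps)$, hence also $\big|\,\norm{P_{\Lambda_\eps}(x)\phi_0}_{L^2(F_x)}-1\,\big|=\mathcal{O}(\eps)$, so dividing by this normalisation constant changes nothing at order $\eps$, and taking the supremum over $B$ yields the claim.

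For the projection estimate I would write both projections as Riesz integrals over one common contour. The fibre metrics $g_{F_x}$ depend smoothly on $x\in B$ and $\Lambda_0(x)\equiv\Lambda_0$ by assumption, so the second Dirichlet eigenvalue $\Lambda_1(x)$ of $-\Delta_{F_x}$ is continuous and $>\Lambda_0$, whence $\gamma:=\min_{x\in B}\big(\Lambda_1(x)-\Lambda_0\big)>0$. Since $\rho=1+\mathcal{O}(\eps)$ smoothly, the expansion of $V_\rho$ from section~\ref{sect:pert} gives $\norm{V_\rho}_{\mathscr{C}^0(M)}=\mathcal{O}(\eps)$, and by the min-max principle the circle $\Gamma:=\{z\in\C:\abs{z-\Lambda_0}=\gamma/2\}$ encircles exactly the ground state eigenvalue of $-\Delta_{F_x}+V_\rho\vert_{F_x}$ and stays at distance $\geq\gamma/4$ from the whole spectrum, uniformly in $x$ for $\eps$ small. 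Then, with all operators carrying Dirichlet conditions on $\partial F_x$, the resolvent identity gives
\begin{align*}
 P_{\Lambda_\eps}(x)-P_{\Lambda_0}(x)
 &=\frac{1}{2\pi\ui}\oint_\Gamma\Big(\big(z+\Delta_{F_x}-V_\rho\big)^{-1}-\big(z+\Delta_{F_x}\big)^{-1}\Big)\,\ud z\\
 &=\frac{1}{2\pi\ui}\oint_\Gamma\big(z+\Delta_{F_x}-V_\rho\big)^{-1}V_\rho\big(z+\Delta_{F_x}\big)^{-1}\,\ud z\,.
\end{align*}

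It then remains to bound the integrand in the operator norm on $L^\infty(F_x)$, uniformly in $x$ and $z\in\Gamma$. The $L^2\to L^2$ norms of the two resolvents are bounded by $4/\gamma$; to pass to $L^\infty$ I would invoke interior and boundary elliptic $L^p$-regularity on the compact manifold-with-boundary $F_x$ together with the Sobolev embedding $W^{2,p}(F_x)\hookrightarrow\mathscr{C}^0(F_x)$ for $p>\dim F$, which bounds $\norm{(z+\Delta_{F_x})^{-1}}_{L^\infty\to L^\infty}$ and, $V_\rho$ being a bounded perturbation, also $\norm{(z+\Delta_{F_x}-V_\rho)^{-1}}_{L^\infty\to L^\infty}$ — with constants uniform over the compact base, since there the coefficients of $\Delta_{F_x}$ and the ellipticity constants are. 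As multiplication by $V_\rho$ has $L^\infty\to L^\infty$ norm $\norm{V_\rho}_{\mathscr{C}^0(M)}=\mathcal{O}(\eps)$, the integrand is $\mathcal{O}(\eps)$ in $L^\infty\to L^\infty$ uniformly in $x$ and $z$, and integrating over the fixed contour $\Gamma$ yields the required bound on $\norm{P_{\Lambda_\eps}(x)-P_{\Lambda_0}(x)}_{L^\infty(F_x)\to L^\infty(F_x)}$. The only genuinely delicate points are the two uniformities in $x$, of the gap $\gamma$ and of the elliptic constants controlling the $L^\infty$-boundedness of the fibre resolvents; both follow at once from compactness of $B$ and smoothness of the fibration, while everything else is routine perturbation theory and elliptic regularity on a fixed compact fibre.
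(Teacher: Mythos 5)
Your proposal is correct and follows essentially the same route as the paper: reduce to the difference of the fibre spectral projections, write it as a Riesz contour integral around $\Lambda_0$ using the uniform spectral gap obtained from compactness of $B$, use $\norm{V_\rho}_\infty=\mathcal{O}(\eps)$ to make the integrand small, and upgrade from $L^2$ to the sup-norm by elliptic regularity and Sobolev embedding on the compact fibres, with uniformity in $x$ coming from the smooth fibration. The only (cosmetic) difference is that you bound the resolvents as operators $L^\infty\to L^\infty$ via $W^{2,p}$-regularity with $p>\dim F$, whereas the paper bounds the integrand on $W^{k,2}(F_x)$ for $k>\dim F/2$ applied to the smooth function $\phi_0$ and then embeds $W^{k,2}\hookrightarrow\mathscr{C}^0$ using finitely many local trivialisations.
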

\begin{proof}
It is clearly sufficient to prove that $(1-P_{\Lambda_\eps})\phi_0 =\mathcal{O}(\eps)$.
Since the spectrum of $-\Delta_{F_x}$ depends continuously on $x \in B$ (by the min-max principle), we have
\begin{equation*}
 \inf_{x\in B} \big( \sigma(-\Delta_F\vert_{F_x})\setminus \lbrace \Lambda_0 \rbrace \big ) - \Lambda_0 
= \min_{x\in B} \big( \sigma(-\Delta_F\vert_{F_x})\setminus \lbrace \Lambda_0 \rbrace \big ) - \Lambda_0
= c>0\,.
\end{equation*}
For $\eps$ small enough we then have the integral representation
\begin{equation}\label{eq:phi eps}
 (1-P_{\Lambda_\eps})\phi_0\big\vert_{F_x}= \frac{\ui}{2\pi} \int\limits_{\abs{z-\Lambda_0}=c/2} \frac{1}{-\Delta_F + V_\rho -z} V_\rho \frac{1}{-\Delta_F -z} \phi_0\bigg\vert_{F_x}\ud z
\end{equation}
which follows from the Riesz formula for the spectral projection.
Now since for any $k\in \N$: ${\norm{V_\rho}_{\mathscr{C}^k(M,g)}=\mathcal{O}(\eps)}$, the integrand is an operator of order $\eps$ on $W^k(F_x, g_{F_x})$. 

Now let $r>0$ be less than the injectivity radius of $(B,g_B)$ and choose points $\lbrace x_i:i\in  I \rbrace$ such that the metric balls $B(r/2, x_i)$ form a finite open cover of $B$. Then let $U_i:=B(r, x_i)$ be the open cover by balls of double radius and choose trivialisations $\Phi_i{:}\, \pi^{-1}(U_i) \to U_i \times F$. The map $\Phi_i$ has bounded derivatives on $\pi^{-1}(B(r/2, x_i))$ (with respect to some fixed metric $g_0$ on $F$), so for $x\in B(r/2, x_i)$ it induces a bounded map $\Phi_{i*}{:}\,W^k(F_x, g_{F_x})\to W^k(F, g_0)$ with norm less than a constant $C(\Phi_i)$.
Then taking $k>\dim F/2$ the Sobolev embedding theorem, applied to $\Phi_{i*}(1-P_{\Lambda_\eps})\phi_0$, gives
\begin{equation*}
 \norm{(1-P_{\Lambda_\eps})\phi_0}_\infty
 =\max_{i\in I}  \norm{\Phi_{i*}(1-P_{\Lambda_\eps})\phi_0}_\infty
 \leq C \eps \max_{i\in I} C(\Phi_i)
 =\mathcal{O}(\eps)\,.
\end{equation*}
\end{proof}
\begin{thm}\label{thm:uniform}
Assume $d:=\dim B\leq 3$.
Let $\mu$ be a simple eigenvalue of $H_0$ and ${\psi\in \ker(H_0 - \mu)}$ a normalised eigenfunction. Then there exists $\eps_0>0$ such that for every $0<\eps<\eps_0$ there exists a simple eigenvalue ${\lambda=\eps^2\mu +\mathcal{O}(\eps^3)}$ of $H$ and a normalised eigenfunction $\varphi \in \ker(H - \lambda)$ such that $\varphi$ converges uniformly to $\psi\phi_0$ as $\eps\to 0$. 
More precisely there exists a constant $C$ such that for all $\eps< \eps_0$
\begin{equation*}
 \norm{\varphi - \psi\phi_0}_\infty \leq C \eps \theta_d(\eps)\,,
\end{equation*}
where $\theta_1(\eps)\equiv 1$, $\theta_2(\eps)= \sqrt{\log{\eps^{-1}}}$, $\theta_3(\eps)=\eps^{-1/2}$.
Additionally, if $\psi$ is real then $\varphi$ may be chosen real.
\end{thm}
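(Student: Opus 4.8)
\section*{Proof proposal for Theorem~\ref{thm:uniform}}

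The plan is to deduce everything from Theorem~\ref{thm:adiab} and the maximum‑principle estimate of Lemma~\ref{lem:max}. By the discussion following~\eqref{eq:Ha}, a simple eigenvalue $\mu$ of $H_0$ produces a simple eigenvalue $\nu=\eps^2\mu+\mathcal{O}(\eps^3)$ of $H_\mathrm{a}$ that is separated from the rest of $\sigma(H_\mathrm{a})$ by a gap of order $\eps^2$, together with a normalised $\psi_\eps\in\ker(H_\mathrm{a}-\nu)$ with $\norm{\psi_\eps-\psi}_{W^2(B,g_B)}=\mathcal{O}(\eps)$, hence $\norm{\psi_\eps-\psi}_\infty=\mathcal{O}(\eps)$ because $d\leq 3$. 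Theorem~\ref{thm:adiab} then yields $\lambda=\eps^2\mu+\mathcal{O}(\eps^3)\in\sigma(H)$, simple, and $\varphi\in\ker(H-\lambda)$ with $\norm{\phi_0\psi_\eps-\varphi}_{W^1(M,g)}=\mathcal{O}(\eps)$. Setting $f:=\phi_0\psi_\eps-\varphi$ and using $\norm{\varphi-\psi\phi_0}_\infty\leq\norm{f}_\infty+\norm{\phi_0}_\infty\norm{\psi_\eps-\psi}_\infty$, it suffices to prove $\norm{f}_\infty=\mathcal{O}(\eps\,\theta_d(\eps))$.

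First I would observe that $f\in\mathscr{C}^2(M)$, $f\vert_{\partial M}=0$, and $(H-\lambda)f=\delta:=(H-\lambda)(\phi_0\psi_\eps)$, and that $\norm{\delta}_\infty=\mathcal{O}(\eps)$: writing $H-\lambda=(-\Delta_F-\Lambda_0)-\eps^2\Delta_h+V_\rho+\eps^3S_\eps-\lambda$, the first term annihilates $\phi_0\psi_\eps$ since $\phi_0\vert_{F_x}$ is the Dirichlet ground state of $-\Delta_{F_x}$ at energy $\Lambda_0$ and $\psi_\eps$ is constant along the fibres; the terms $\eps^2\Delta_h(\phi_0\psi_\eps)$ and $\eps^3S_\eps(\phi_0\psi_\eps)$ are $\mathcal{O}(\eps^2)$ because $\phi_0$ and $\psi_\eps$ have $\eps$‑uniform $\mathscr{C}^2$‑bounds (for $\psi_\eps$ by elliptic regularity for the uniformly elliptic $\eps^{-2}H_\mathrm{a}$, for $\phi_0$ by the remarks preceding Lemma~\ref{lem:phi eps}); and $\norm{V_\rho}_\infty=\mathcal{O}(\eps)$, $\lambda=\mathcal{O}(\eps^2)$. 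Thus Lemma~\ref{lem:max} applies (after a harmless shift of $H$ by a constant of order $\eps^2$, should $\lambda<0$) and gives $|f(x)|\leq C\bigl(\bigl(\int_{\Omega(x)}|f|^2+g_\eps(\ud f,\ud f)\,\vol_{g_\eps}\bigr)^{1/2}+\eps\bigr)$ with $\Omega(x)=\pi^{-1}\bigl(B(\eps R,\pi(x))\bigr)$. Since $\vol_{g_\eps}=\eps^{-d}\vol_g$ and $g_\eps(\ud f,\ud f)\leq g(\ud f,\ud f)$, the global $W^1$‑bound already yields the crude estimate $\norm{f}_\infty=\mathcal{O}(\eps^{1-d/2})$ — which proves uniform convergence for $d\leq 2$, but with the wrong rate and with no information at all for $d=3$.

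To reach the sharp rate I would split $f=\phi_0F_\parallel+f_\perp$ fibrewise, with $F_\parallel(x):=\langle\phi_0,f\rangle_{L^2(F_x)}$ a function on $B$ and $f_\perp$ fibrewise orthogonal to $\phi_0$; one checks $f_\perp=-(1-P_0)\varphi$ (where $P_0$ is the fibrewise projection onto $\phi_0$), $\norm{F_\parallel}_{W^1(B,g_B)}=\mathcal{O}(\eps)$ and $\norm{f_\perp}_{W^1(M,g)}=\mathcal{O}(\eps)$, and estimate the two pieces by different means. For $\phi_0F_\parallel$: since $F_\parallel$ lives on $B$ and is controlled in $W^1(B,g_B)$, I use the best available Sobolev embedding $W^1(B)\hookrightarrow L^{q(d)}(B)$ — with $q(1)=\infty$, $q(3)=6$, and for $d=2$ the family of embeddings into $L^p$ with operator norm $\mathcal{O}(\sqrt p)$ — then Hölder over the slab $B(\eps R,\pi(x))$ of volume $\mathcal{O}(\eps^d)$ against the weight $\eps^{-d}$ coming from $\vol_{g_\eps}$; this bounds the contribution of $\phi_0F_\parallel$ (value and gradient) to the integral in Lemma~\ref{lem:max} by $\mathcal{O}(\eps^2\theta_d(\eps)^2)$, the logarithm in $\theta_2$ appearing on optimising $p\sim\log\eps^{-1}$ and the failure for $d>3$ reflecting the exponent $\eps^{4-d}$. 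For $f_\perp$: here I would use that the fibre operator $-\Delta_F-\Lambda_0$ is bounded below by the spectral gap $c>0$ on $\phi_0^\perp$, uniformly in $x$, together with interior/boundary elliptic estimates for $-\Delta_{g_\eps}$ on slabs (a Caccioppoli energy estimate for $(H-\lambda)f=\delta$ combined with Lemma~\ref{lem:reg}), to absorb the horizontal second‑order term $\eps^2\Delta_h f$ and show that the slab‑localised $W^1$‑density of $f_\perp$ is $\mathcal{O}(\eps^2)$, so that its contribution to the Lemma~\ref{lem:max} integral is $\mathcal{O}(\eps)$. Combining the two gives $\norm{f}_\infty\leq C\eps\,\theta_d(\eps)$.

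Finally, if $\psi$ is real: $H_0$, $-\Delta_F$, $H$ and $\phi_0$ are all real, so the whole construction — the spectral projections, $\psi_\eps$, and the eigenfunction furnished by Theorem~\ref{thm:adiab} — can be carried out over $\mathbb{R}$; equivalently one replaces $\varphi$ by $\operatorname{Re}\varphi$, still an eigenfunction since $\lambda\in\mathbb{R}$, and nonzero for $\eps$ small because it converges uniformly to the real, nontrivial function $\psi\phi_0$. The step I expect to be the real obstacle is the control of $f_\perp$: showing that the fibrewise‑perpendicular part decays at the full rate $\eps$ uniformly, rather than merely in $L^2(M,g)$, forces one to handle the horizontal second derivatives of $f$, and it is precisely here — and in the borderline $\sqrt{\log}$ loss at $d=2$ — that the lack of uniform ellipticity of $-\Delta_{g_\eps}$ makes the estimates delicate and dictates the restriction $\dim B\leq 3$.
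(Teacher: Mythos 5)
Your reduction of the problem to bounding $f$ via Lemma~\ref{lem:max}, and your treatment of the zeroth-order contribution $\int_{\Omega(x)}\abs{f}^2\vol_{g_\eps}$ by a base-direction Sobolev embedding with the $\theta_d(\eps)$ loss, are exactly the paper's strategy (the paper applies that embedding to all of $f$ fibre-point by fibre-point, so the splitting $f=\phi_0F_\parallel+f_\perp$ is not needed for this term). The genuine gap is the gradient term. The global bound only gives $\int_{\Omega(x)}g_\eps(\ud f,\ud f)\vol_{g_\eps}\leq\eps^{-d}\norm{f}^2_{W^1(M,g)}=\mathcal{O}(\eps^{2-d})$, so one must show that the energy localised to the slab $\Omega(x)$ improves on the global energy by the volume fraction $\eps^d$ --- and this is precisely the step you leave as an assertion (``a Caccioppoli energy estimate\dots the real obstacle''). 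Moreover your stated target --- contribution $\mathcal{O}(\eps)$ of $f_\perp$ to the integral --- is in any case too weak for $d\leq2$, since after the square root it only yields $\sqrt{\eps}$; and your ``crude estimate'' $\mathcal{O}(\eps^{1-d/2})$ does not prove convergence for $d=2$.

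The paper's mechanism for the gradient term is quite different and is the heart of the proof. It works with $\phi_\eps$ rather than $\phi_0$, so that the fibre part of $H$ applied to $\phi_\eps\psi_\eps$ produces $(\Lambda_\eps-\Lambda_0)\phi_\eps\psi_\eps=\mathcal{O}(\eps^2)$ instead of the $\mathcal{O}(\eps)$ term $V_\rho\phi_0\psi_\eps$. It then uses the fibrewise positivity $-\Delta_F+V_\rho-\Lambda_0\geq-c\eps^2$ to extend the gradient-plus-potential integral from $\Omega(x)$ to all of $M$, where it can be evaluated as a quadratic form: $\eps^{-d}\langle f,(-\Delta_{g_\eps}+V_\rho-\Lambda_0+c\eps^2)f\rangle_{L^2(M,g)}=\eps^{-d}\langle f,\delta\rangle+\mathcal{O}(\eps^{4-d})$, and by self-adjointness of $H$ and $(H-\lambda)\varphi=0$ one has the crucial cancellation $\langle f,\delta\rangle=\langle\psi_\eps,(H_\mathrm{a}-\lambda)\psi_\eps\rangle=\nu-\lambda=\mathcal{O}(\eps^4)$ from Theorem~\ref{thm:adiab}. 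A plain Cauchy--Schwarz on $\langle f,\delta\rangle$ gives only $\mathcal{O}(\eps^{3-d})$ even with the better choice $\phi_\eps$, so for $d\geq2$ this identity cannot be replaced by size estimates; it is entirely absent from your proposal. If you insist on your local route you would have to actually prove a Caccioppoli inequality with a cutoff in the base direction (whose differential has $g_\eps$-norm $\mathcal{O}(1)$), bounding the slab energy by $\int_{\Omega}\abs{f}^2\vol_{g_\eps}+\int_{\Omega}\abs{f}\,\abs{\delta}\vol_{g_\eps}$ and then feeding in the Sobolev bound on $\int_{\Omega}\abs{f}^2\vol_{g_\eps}$; as written, that step is missing.
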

\begin{proof}
 Perturbation theory for $H_\mathrm{a}=\eps^2 H_0 + \mathcal{O}(\eps^3)$ gives us an eigenvalue $\nu=\eps^2\mu + \mathcal{O}(\eps^3)$ and an eigenfunction $\psi_\eps\in \ker(H_\mathrm{a} -\nu)$ with $\norm{\psi - \psi_\eps}_{\infty} =\mathcal{O}(\eps)$. Theorem~\ref{thm:adiab} gives existence of $\lambda$ and $\varphi$. Note that if $\psi$ is real we may choose both $\psi_\eps$ and $\varphi$ real, since their imaginary parts are necessarily small.
 
 It remains to prove that $\norm{\psi_\eps \phi_0 - \varphi}_{\infty}$ converges to zero, or in view of lemma~\ref{lem:phi eps} that $\norm{\psi_\eps \phi_\eps - \varphi}_{\infty} \to 0$.
 This will be achieved by using lemma~\ref{lem:max} with $f:=\psi_\eps\phi_\eps - \varphi$ and $\lambda(\eps)=\lambda$. We have
 \begin{equation*}
  (H-\lambda)f= (H-\lambda)\psi_\eps\phi_\eps\,,
 \end{equation*}
 which is exactly of the form~\eqref{eq:Delta f} with
\begin{align*}
 \delta:&= (H-\lambda)\psi_\eps\phi_\eps
=\big(-\eps^2\Delta_h +\eps^3 S_\eps- \lambda\big)\psi_\eps\phi_\eps +  \psi_\eps\big(-\Delta_F + V_\rho - \Lambda_0\big)\phi_\eps\\
&=\big(-\eps^2\Delta_h +\eps^3 S_\eps- \lambda + \Lambda_\eps - \Lambda_0\big)\psi_\eps\phi_\eps\,.
\end{align*}
Since $\lambda=\mathcal{O}(\eps^2)=\Lambda_\eps - \Lambda_0$ by condition~\eqref{eq:cond rho}, this is bounded by 
\begin{equation*}
C\eps^2 \norm{\phi_\eps \psi_\eps}_{\mathscr{C}^2(M, g)}\leq 
C\eps^2 \norm{\phi_\eps}_{\mathscr{C}^2(M,g)} \norm{\psi_\eps}_{\mathscr{C}^2(B, g_B)}
=\mathcal{O}(\eps^2)\,,
\end{equation*}
because $\psi_\eps$ is a bounded eigenfunction of $H_\mathrm{a}$.

Now lemma~\ref{lem:max} may be applied and we need to estimate the integral on the right hand side of equation~\eqref{eq:f bound}
for our choice of $f=\psi_\eps\phi_\eps - \varphi$. 
To begin with we write
\begin{equation*}
\int\limits_{\Omega(x)} \abs{f}^2 + g_\eps(\ud f, \ud f) \,\vol_{g_\eps}
= \int\limits_{\Omega(x)} (1+\Lambda_0 - V_\rho)\abs{f}^2 + g_\eps(\ud f, \ud f) + (V_\rho- \Lambda_0)\abs{f}^2\,\vol_{g_\eps}\,.
\end{equation*}
Since, by condition~\eqref{eq:cond rho}, $-\Delta_F + V_\rho -\Lambda_0 \geq - c\eps^2$ we can estimate the latter terms by integrating over the whole of $M$.
That is, using that $\vol_{g_\eps}=\eps^{-d}\vol_{g}$, the latter terms are bounded by
\begin{align}
 \int\limits_M& g_\eps(\ud f, \ud f) +  (V_\rho- \Lambda_0 + c\eps^2 )\abs{f}^2\,\vol_{g_\eps}\notag\\
&\stackrel{\hphantom{(4)}}{=} \eps^{-d} \langle f, (-\Delta_{g_\eps}  + V_\rho - \Lambda_0 + c\eps^2) f\rangle_{L^2(M,g)}\notag\\
&\stackrel{\eqref{eq:Delta f}}{=}\eps^{-d} \langle f, \delta \rangle\notag 
+ \eps^{-d} \langle f, (\lambda + c \eps^2- \eps^3 S_\eps) f\rangle 
\end{align}
Now the fact that $(H-\lambda)f=(H-\lambda)\phi_\eps \psi_\eps$ and $H$ is self-adjoint implies
\begin{equation*}
\eps^{-d}\langle f , \delta \rangle_{L^2(M,g)} = \eps^{-d}\langle \psi_\eps, (H_\mathrm{a} - \lambda) \psi_\eps \rangle_{L^2(B,g_B)}=\eps^{-d}(\nu - \lambda)\stackrel{\ref{thm:adiab}}{=}\mathcal{O}(\eps^{4-d})\,.
\end{equation*}
The second term is easily estimated by
\begin{equation*}
\eps^{-d} \abs{\langle f, (\lambda + c \eps^2- \eps^3 S_\eps) f\rangle} \leq C\eps^{2-d} \norm{f}^2_{W^1(M,g)} \stackrel{\ref{thm:adiab}}{=} \mathcal{O}(\eps^{4-d})\,,
\end{equation*}
since $\norm{\phi_0 -\phi_\eps}^2_{W^1(M,g)}=\mathcal{O}(\eps^2)$ by equation~\eqref{eq:phi eps} and~\cite[lemma 3.8]{LT}.

To estimate the integral of $\abs{f}^2$ over $\Omega(x)$ we will resort to local arguments.
Let $r>0$ and $\lbrace (U_i, \Phi_i): i \in I \rbrace$ be the cover of $B$ and trivialisations chosen in the proof of lemma~\ref{lem:phi eps}. This has the property that for every $x\in B$ there is $i(x)\in I$ such that $B(r/2, x)\subset U_{i(x)}$.
Then if $R$ is the radius we get from lemma~\ref{lem:max}, $\Omega(x)$ is contained in $\pi^{-1}(U_{i(x)})$ for $\eps R\leq r/2$. 
We may thus rewrite the integral over $\Omega(x)$ in the trivialisation $\Phi:=\Phi_{i(x)}$
\begin{equation*}
 \eps^{-d}\int\limits_{\Omega(x)} \abs{f}^2 \vol_g = \eps^{-d}\int\limits_{ F}\int\limits_{\pi(\Omega)} \abs{\Phi_*f}^2 \vol_{g_B}\vol_{\Phi_*g_F}\,.
\end{equation*}
Let $\xi \in \mathscr {C}^\infty_0(\R^d)$ be zero outside of $B(r/2, 0)$ and equal to one on $B(r/4,0)$ and let $\chi \in \mathscr {C}^\infty_0(B(r/2,x))$ be the pullback of $\xi$ by a geodesic coordinate system centred at $x$.
Then we claim that
\begin{equation*}
\eps^{-d} \int\limits_{\pi(\Omega)} \abs{\Phi_*f}^2 \vol_{g_B} \leq C \theta_d^2(\eps) \int\limits_{B(r/2,x)} \abs{\chi \Phi_* f}^2 + g_B(\ud \chi \Phi_* f, \ud \chi \Phi_* f) \vol_{g_B}
\end{equation*}
for $\eps R \leq r/4$. In fact, since $\mathrm{Vol}_{g_B}(\pi(\Omega))=\mathcal{O}(\eps^d)$, for $d=1$ this is just the Sobolev embedding theorem and for $d=2,3$ it can be shown in a similar way, e.g~by using the Fourier transform in local coordinates (see~\cite[appendix C]{Lam}). Integrating this inequality over $F$ gives
\begin{equation*}
\eps^{-d}\int\limits_{\Omega(x)} \abs{f}^2 \vol_g \leq C \theta_d^2(\eps) \norm{f}^2_{W^1(M,g)}\,, 
\end{equation*}
with a constant $C$ that is independent of $x$. Since $\norm{f}^2_{W^1(M,g)}=\mathcal{O}(\eps^2)$ this completes the proof.
\end{proof}
\begin{cor}\label{cor:reg}
Let $M$, $\psi$, $\varphi$ and $\theta_d(\eps)$ be as in theorem~\ref{thm:uniform}. Then there exist constants $C$ and $\eps_0>0$ such that for every $\eps<\eps_0$
\begin{equation*}
 \norm{\psi\phi_0 - \varphi}_{\mathscr{C}^2(M, g_\eps)}\leq C \eps \theta_d(\eps)\,.
\end{equation*}
\end{cor}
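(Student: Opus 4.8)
The plan is to apply the elliptic regularity estimate of Lemma~\ref{lem:reg} to the difference $f:=\psi\phi_0-\varphi$ and then to feed in the uniform bound from Theorem~\ref{thm:uniform}. So first I would note that $f\in\mathscr{C}^2(M)$ and that $f$ vanishes on $\partial M$: the eigenfunction $\varphi$ satisfies Dirichlet conditions, and $\phi_0\vert_{F_x}$ is a Dirichlet ground state, so $\phi_0\vert_{\partial M}=0$. Since $(H-\lambda)\varphi=0$, the function $f$ solves the boundary value problem~\eqref{eq:Delta f} with $\lambda(\eps)=\lambda=\eps^2\mu+\mathcal{O}(\eps^3)$ and right hand side $\delta:=(H-\lambda)(\psi\phi_0)$. (One could instead work with $\psi_\eps\phi_\eps-\varphi$, whose defect was already estimated in the proof of Theorem~\ref{thm:uniform}, but then one would need to bound $\psi\phi_0-\psi_\eps\phi_\eps$ in the norm $\mathscr{C}^2(M,g_\eps)$, which would require controlling $\psi-\psi_\eps$ in a norm stronger than the $W^2(B)$-bound available; using $\psi\phi_0$ directly avoids this.)

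The key point is that $\norm{\delta}_\infty=\mathcal{O}(\eps)$. To see this I would run the same computation as the one producing $\delta$ in the proof of Theorem~\ref{thm:uniform}: writing $H=-\eps^2\Delta_h+\eps^3 S_\eps-\Delta_F+V_\rho-\Lambda_0$ and using that $(-\Delta_F-\Lambda_0)\phi_0=0$ on every fibre gives
\[
\delta=\big(-\eps^2\Delta_h+\eps^3S_\eps-\lambda\big)(\psi\phi_0)+V_\rho\,\psi\phi_0\,.
\]
Since $\psi\phi_0=\pi^*\psi\cdot\phi_0$ is $\eps$-independent and smooth, hence bounded in every $\mathscr{C}^k(M,g)$, the bounds on $G_\eps-g_\eps$ from Section~\ref{sect:pert} give $\big(-\eps^2\Delta_h+\eps^3S_\eps-\lambda\big)(\psi\phi_0)=\mathcal{O}(\eps^2)$ exactly as there, while $\norm{V_\rho}_\infty=\mathcal{O}(\eps)$ yields $\norm{V_\rho\psi\phi_0}_\infty=\mathcal{O}(\eps)$. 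The only difference from the proof of Theorem~\ref{thm:uniform} is that here the fibre term is $V_\rho\phi_0$, which is $\mathcal{O}(\eps)$ rather than the $\mathcal{O}(\eps^2)$ obtained from $\phi_\eps$ via condition~\eqref{eq:cond rho}; this is why $\delta$ is only $\mathcal{O}(\eps)$, which is nevertheless enough.

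Finally I would assemble the pieces. Lemma~\ref{lem:reg} provides a constant $C$ independent of $\eps$ with
\[
\norm{\psi\phi_0-\varphi}_{\mathscr{C}^2(M,g_\eps)}=\norm{f}_{\mathscr{C}^2(M,g_\eps)}\leq C\big(\norm{f}_\infty+\norm{\delta}_\infty\big)\,,
\]
and by Theorem~\ref{thm:uniform} we have $\norm{f}_\infty=\norm{\psi\phi_0-\varphi}_\infty\leq C\eps\,\theta_d(\eps)$, while $\norm{\delta}_\infty=\mathcal{O}(\eps)\leq C\eps\,\theta_d(\eps)$ because $\theta_d(\eps)\geq1$. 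Combining the two estimates gives the claim. I expect the only non-routine point to be the verification that $\norm{\delta}_\infty=\mathcal{O}(\eps)$ — in particular keeping track of the $\eps^3S_\eps$ contribution through the hypotheses on the perturbed metric — with everything else being a direct appeal to Lemma~\ref{lem:reg} and Theorem~\ref{thm:uniform}.
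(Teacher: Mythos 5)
Your proposal is correct and follows essentially the same route as the paper: the paper likewise writes $(H-\lambda)(\psi\phi_0-\varphi)=(H-\lambda)\psi\phi_0=(-\eps^2\Delta_h+\eps^3 S_\eps+V_\rho-\lambda)\psi\phi_0$ using $(-\Delta_F-\Lambda_0)\phi_0=0$, bounds this by $C\eps\norm{\phi_0}_{\mathscr{C}^2(M,g)}\norm{\psi}_{\mathscr{C}^2(B,g_B)}=\mathcal{O}(\eps)$, and then invokes Lemma~\ref{lem:reg} together with Theorem~\ref{thm:uniform}. Your additional observations (why to work with $\psi\phi_0$ rather than $\psi_\eps\phi_\eps$, and that the fibre term is only $\mathcal{O}(\eps)$ here) are accurate and consistent with the paper's argument.
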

\begin{proof}
We have
\begin{align}
\big(H-\lambda\big)(\psi\phi_0 - \varphi)&=\big(H-\lambda\big)\psi\phi_0\notag\\
&=\big(-\eps^2\Delta_h +\eps^3 S_\eps + V_\rho - \lambda\big)\psi\phi_0 + \psi \underbrace{\big(-\Delta_F - \Lambda_0)\phi_0}_{=0}\label{eq:Hpsi}
\end{align}
 and this is bounded by $C\eps \norm{\phi_0}_{\mathscr{C}^2(M,g)} \norm{\psi}_{\mathscr{C}^2(B, g_B)}
=\mathcal{O}(\eps)$. Hence lemma~\ref{lem:reg} together with theorem~\ref{thm:uniform} proves the claim.
\end{proof}
\subsection{\textbf{Closed manifolds fibred over the circle}}\label{sect:S1}
Now that we have uniform convergence of the appropriate eigenfunctions we will study the nodal set of these functions in the case $\partial M =\varnothing$ and $B=S^1$. The estimation of the location of $\mathcal{N}(\varphi)$ will be rather simple to derive in this context. This allows us to exhibit the structure of the argument in a clear way, which will prove useful for the proof in the slightly more involved case of section~\ref{sect:node gen}.
In addition, in this specific case we are able to obtain significantly stronger results. Namely we prove in theorem~\ref{thm:iso} that the nodal set $\mathcal{N}(\varphi)$ is the disjoint union of embedded submanifolds $\iota_k{:}\,F\to M$, one for every element of $\psi^{-1}(0)$, and every one of these submanifolds is isotopic to a fibre of $\pi{:}\,M\to B$.   

Throughout this section we will work in a fixed model of $B$, namely  we let ${L:=\mathrm{diam}(B)}$ and make use of the isometry of $(B, g_B)$ with $\big(\R/2L\mathbb{Z}, \ud s^2\big)$, where we denote points by $s$. Since also $\partial M=\varnothing$ we have $\Lambda_0=0$ and $\phi_0=\pi^* \mathrm{Vol}(F_s)^{-1/2}$.
Additionally, using $\sqrt \rho$ as a trial function, one easily checks that $\Lambda_\eps=\mathcal{O}(\eps^3)$. 
Plugging this into equation~\eqref{eq:Ha} we find that the relevant operator has the simple form
\begin{equation}\label{eq:H0 circ}
 H_0=-\partial_s^2 + \tfrac12 \partial_s^2\log \mathrm{Vol}(F_s) + \tfrac14 \abs{\partial_s \log(\mathrm{Vol}(F_s))}^2
\end{equation}
as an operator on the interval $(-L, L)$ with periodic boundary conditions. Theorem~\ref{thm:uniform} applies to every simple eigenvalue of $H_0$, and generically, that is if $\mathrm{Vol}(F_s)$ is an arbitrary $2L$-periodic function of $s$, all of the eigenvalues of $H_0$ are simple. This allows us to estimate the location of $\mathcal{N}(\varphi)$ based on the analysis of the behaviour of $\psi$.
\begin{prop}\label{prop:node}
Assume $\partial M =\varnothing$ and $B\cong S^1$.
Let $\mu$ be a simple eigenvalue of $H_0$ with real, normalised eigenfunction $\psi$ and $\varphi\in \ker(H-\lambda)$ the corresponding eigenfunction of $H$ provided by theorem~\ref{thm:uniform}. There exist constants $C>0$ and $\eps_0>0$, such that for every $\eps<\eps_0$ and $y \in M$ we have  
\begin{equation*}
 \dist_{g_B}\big(\pi(y),\mathcal{N}(\psi)\big)\geq C\eps \implies 
\mathrm{sign}\big(\varphi(y)\big)=\mathrm{sign}\big(\psi(\pi(y))\big)\,.
\end{equation*}
\end{prop}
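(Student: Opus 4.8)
The plan is to use the uniform convergence $\varphi \to \psi\phi_0$ from Theorem~\ref{thm:uniform} together with a lower bound on $|\psi|$ away from its zero set. Since $B\cong S^1$ and $\psi$ is an eigenfunction of the one-dimensional Schrödinger operator $H_0$ from~\eqref{eq:H0 circ}, zero is automatically a regular value: at any point where $\psi$ vanishes, $\psi'$ cannot vanish too (otherwise $\psi\equiv 0$ by uniqueness for the ODE $H_0\psi=\mu\psi$). Hence $\mathcal{N}(\psi)=\psi^{-1}(0)$ is a finite set of points, and there is a constant $c_0>0$ such that $|\psi(s)|\geq c_0\,\dist_{g_B}(s,\mathcal{N}(\psi))$ for all $s$ — in particular $|\psi(s)|\geq c_0 C\eps$ whenever $\dist_{g_B}(s,\mathcal{N}(\psi))\geq C\eps$. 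Recall also that $\phi_0=\pi^*\mathrm{Vol}(F_s)^{-1/2}$ is a strictly positive smooth function on the compact manifold $M$, so it is bounded below by some $\delta_0>0$.

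The key step is then an elementary sign comparison. For $y\in M$ with $\dist_{g_B}(\pi(y),\mathcal{N}(\psi))\geq C\eps$ we have
\begin{equation*}
 \big|\psi(\pi(y))\phi_0(y)\big| \geq \delta_0 c_0 C \eps\,,
\end{equation*}
while by Theorem~\ref{thm:uniform} $\|\varphi - \psi\phi_0\|_\infty \leq C'\eps\theta_1(\eps) = C'\eps$ (using $\theta_1\equiv 1$ since $\dim B = 1$). Choosing the constant $C$ in the statement large enough that $\delta_0 c_0 C > C'$, the triangle inequality forces $\varphi(y)$ to have the same sign as $\psi(\pi(y))\phi_0(y)$, which since $\phi_0>0$ is the sign of $\psi(\pi(y))$. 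Taking $\eps_0$ small enough that Theorem~\ref{thm:uniform} applies and that $\eps_0 R \le r/4$ as needed in its proof completes the argument.

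The only mild subtlety — and the step I would be most careful about — is the linear lower bound $|\psi(s)|\geq c_0\dist_{g_B}(s,\mathcal{N}(\psi))$. This follows from compactness of $B=\mathbb{R}/2L\mathbb{Z}$: near each zero $s_i$ of $\psi$ one has $\psi(s) = \psi'(s_i)(s-s_i) + O((s-s_i)^2)$ with $\psi'(s_i)\neq 0$, so the bound holds locally; away from all zeros $|\psi|$ is bounded below by a positive constant while $\dist_{g_B}(\cdot,\mathcal{N}(\psi))$ is bounded above, so it holds globally after shrinking $c_0$. Everything else is a direct consequence of the uniform estimate already established, which is why this case is genuinely simpler than the one treated in Section~\ref{sect:node gen}.
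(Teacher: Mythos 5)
Your proposal is correct and follows essentially the same route as the paper: a lower bound $|\psi|\gtrsim \eps$ at distance $\gtrsim\eps$ from $\mathcal{N}(\psi)$ (obtained via non-vanishing of $\psi'$ at the finitely many zeros), positivity of $\phi_0=\pi^*\mathrm{Vol}(F_s)^{-1/2}$, and the uniform estimate $\|\varphi-\psi\phi_0\|_\infty\leq C'\eps$ from Theorem~\ref{thm:uniform}, combined by the triangle inequality. The paper packages the lower bound on $|\psi|$ as a Taylor expansion at the points $s_i\pm C_2\eps$ plus a minimum argument on the intervals between consecutive zeros, whereas you state it as a global linear bound $|\psi(s)|\geq c_0\dist_{g_B}(s,\mathcal{N}(\psi))$; these are equivalent.
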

\begin{proof}
We carry out the proof in several small steps that will reappear in the proof of the more general statement of theorem~\ref{thm:node}.

Let $\mathcal{N}(\psi)=\lbrace s_i: i\in I\rbrace$ be the (finite) set of zeros of $\psi$.
\begin{enumerate}[label=\arabic*)]
 \item\label{step1} There is $C_0>0$ such that $\abs{\partial_s\psi}(s_i)\geq C_0$ for every $i\in I$:
 
 $\psi$ solves the second order ordinary differential equation
\begin{equation*}
\partial_s^2 \psi= \left( V_\eff- \mu \right) \psi\,,
\end{equation*}
so if at any point $\psi(s)=\partial_s\psi(s)=0$ it must vanish everywhere since zero is the unique solution of the equation with that initial condition. Thus the derivative of $\psi$ cannot vanish at any $s_i$. It is also independent of $\eps$, since $\psi$ does not depend on $\eps$ at all. $C_0$ may now be chosen as the minimum of $\abs{\partial_s \psi(s_i)}$ over the finite set $I$.
\item\label{step2} For any $C_1>0$ and $\eps$ small enough, we have $\abs{\psi(s_i\pm C_2 \eps)}>C_1\eps$ with $C_2:=2 C_1/C_0$:

By Taylor expansion
\begin{equation*}
 \abs{\psi(s_i + 2C_2/C_0\eps)}=2C_1 \eps \abs{\partial_s \psi}(s_i)/C_0 + \mathcal{O}(\eps^2)> C_1 \eps\,.
\end{equation*}
\item\label{step3} If $\mathrm{dist}(s,\mathcal{N}(\psi))\geq C_2 \eps$, then $\abs{\psi(s)}>C_1 \eps$ for $\eps$ small enough:

If in the interval $[s_i, s_j]$ between two consecutive zeros of $\psi$ there is no local minimum of $\abs{\psi}$, then $\abs{\psi}$ attains its minimum on the boundary of $[s_i + C_2\eps, s_j - C_2\eps]$, where it is larger than $C_1\eps$ by step~\ref{step2}.

If on the other hand there is a local minimum at $s^*\in[s_i, s_j]$ we just need $\eps$ to be small enough to ensure that
$\abs{\psi}(s^*)>C_1\eps$.
\item\label{step4} Denote by $C_3$ the constant of theorem~\ref{thm:uniform} with $d=1$.
The proof is completed by letting $C_1:=\lVert\phi_0^{-1}\rVert_\infty C_3$ and $C:=C_2=2C_1/C_0$:

First note that since $\partial F=\varnothing$ we have $\phi_0=\pi^*\mathrm{Vol}(F_x)^{-1/2}$, so $\lVert\phi_0^{-1}\rVert_\infty$ is finite.
Now let $y\in M$ with $x=\pi(y)$ satisfy $\dist_{g_B}\left(x,\mathcal{N}(\psi)\right)\geq C \eps$. By step~\ref{step3} we have 
\begin{equation*}
\abs{\psi \phi_0(y)}
>C_3 \eps \phi_0 \norm{\phi_0^{-1}}_\infty\geq C_3 \eps\,,
\end{equation*}
and since by theorem~\ref{thm:uniform}
\begin{equation*}
 \Vert\psi\phi_0 - \varphi\Vert_\infty
 \leq C_3\eps\,,
\end{equation*}
$\varphi$ must have the same sign as $\psi$.
\end{enumerate}
\end{proof}
The estimate on $\mathrm{sign}(\varphi)$ we just derived tells us that $\varphi$ is non-zero far from $\pi^{-1}\mathcal{N}(\psi)$, and also that it must change sign in a neighbourhood of this set. This entails convergence of the nodal set rather directly.
\begin{cor}\label{cor:Hausdorff}
Let $\psi$, $\varphi$, $C$ and $\eps_0$ be as in proposition~\ref{prop:node}. Then for all $\eps<\eps_0$ 
\begin{equation*}
\dist\big(\mathcal{N}(\varphi), \pi^{-1}\mathcal{N}(\psi)\big)\leq C\eps\,,
\end{equation*}
where $\dist$ denotes the Hausdorff distance with respect to the metric $g$.
\end{cor}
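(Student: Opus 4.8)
The goal is to establish the Hausdorff distance bound
\begin{equation*}
\dist\big(\mathcal{N}(\varphi), \pi^{-1}\mathcal{N}(\psi)\big)\leq C\eps
\end{equation*}
directly from Proposition~\ref{prop:node}. Recall that the Hausdorff distance between two closed sets $A,B$ is $\max\{\sup_{a\in A}\dist(a,B),\ \sup_{b\in B}\dist(b,A)\}$, so the plan is to bound both one-sided distances separately, using the sign information already obtained and the continuity of $\varphi$.

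For the first direction, I would take any $y\in\mathcal{N}(\varphi)$, so $\varphi(y)=0$. If $\dist_{g_B}(\pi(y),\mathcal{N}(\psi))\geq C\eps$, then by Proposition~\ref{prop:node} we would have $\sgn(\varphi(y))=\sgn(\psi(\pi(y)))\neq 0$ (since $\psi$ is nonzero away from its zero set), contradicting $\varphi(y)=0$. Hence $\dist_{g_B}(\pi(y),\mathcal{N}(\psi))<C\eps$, which means there is $s_i\in\mathcal{N}(\psi)$ within $g_B$-distance $C\eps$ of $\pi(y)$. Then the point $y'$ obtained by moving $y$ horizontally (along a minimizing $g_B$-geodesic lifted to $M$) to the fibre over $s_i$ lies in $\pi^{-1}\mathcal{N}(\psi)$, and $\dist_g(y,y')$ is controlled by $\dist_{g_B}(\pi(y),s_i)<C\eps$ up to the geometry of the submersion; since $g = g_F + \pi^*g_B$, a horizontal lift of a $g_B$-geodesic of length $\ell$ has $g$-length $\ell$, so in fact $\dist_g(y,y')\leq C\eps$. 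Thus $\dist_g(y,\pi^{-1}\mathcal{N}(\psi))\leq C\eps$, giving $\sup_{y\in\mathcal{N}(\varphi)}\dist_g(y,\pi^{-1}\mathcal{N}(\psi))\leq C\eps$.

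For the reverse direction, take $z\in\pi^{-1}\mathcal{N}(\psi)$, i.e. $\psi(\pi(z))=0$. Since $\psi$ changes sign at each $s_i\in\mathcal{N}(\psi)$ (its derivative there is nonzero by step~1 of the proof of Proposition~\ref{prop:node}), for $\eps$ small there are points $s_\pm$ within $g_B$-distance $C\eps$ of $\pi(z)$ with $\psi(s_-)<0<\psi(s_+)$ and $\dist_{g_B}(s_\pm,\mathcal{N}(\psi))\geq$ (some fixed fraction of) $C\eps$ — one obtains these exactly as in steps~2--3, possibly after slightly enlarging $C$. Lifting $s_\pm$ horizontally into the fibre direction through $z$ gives points $z_\pm\in M$ with $\pi(z_\pm)=s_\pm$, $\dist_g(z,z_\pm)\leq C\eps$, and by Proposition~\ref{prop:node} $\sgn(\varphi(z_\pm))=\sgn(\psi(s_\pm))$, so $\varphi(z_-)<0<\varphi(z_+)$. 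Since $\varphi$ is continuous and $M$ is connected, along any path in $M$ from $z_-$ to $z_+$ staying within $g$-distance $C\eps$ of $z$ (e.g. concatenating the two short horizontal segments through $z$), $\varphi$ must vanish somewhere; this zero lies in $\mathcal{N}(\varphi)$ (it is an interior zero since $\partial M=\varnothing$) and is within $g$-distance $C\eps$ of $z$. Hence $\sup_{z\in\pi^{-1}\mathcal{N}(\psi)}\dist_g(z,\mathcal{N}(\varphi))\leq C\eps$, and combining the two bounds finishes the proof, after relabelling the constant.

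The main obstacle is bookkeeping rather than anything deep: one must make sure the same constant $C$ (up to an absolute factor) works for both one-sided estimates, and that the horizontal lifts used to pass between base distance and $g$-distance are genuinely length-nonincreasing — which holds because $g=g_F+\pi^*g_B$ and the chosen segments are horizontal, so their $g$-length equals the $g_B$-length of their projection. A minor point is ensuring the vanishing point of $\varphi$ produced in the reverse direction is an interior point, which is automatic here since $\partial M=\varnothing$ so $\mathcal{N}(\varphi)=\overline{\varphi^{-1}(0)}=\varphi^{-1}(0)$.
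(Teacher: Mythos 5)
Your proposal is correct and follows essentially the same route as the paper: the inclusion $\mathcal{N}(\varphi)\subset T_{C\eps}(\pi^{-1}\mathcal{N}(\psi))$ is read off directly from Proposition~\ref{prop:node}, and the reverse inclusion is obtained by running a horizontal lift of a short $g_B$-geodesic through each point of $\pi^{-1}\mathcal{N}(\psi)$, applying Proposition~\ref{prop:node} at its endpoints to force a sign change, and invoking the intermediate value theorem. The only cosmetic difference is that the paper uses the horizontal geodesic $t\mapsto\exp_x(tv_+^*)$ explicitly, while you phrase the same length-preserving lift argument slightly more generally.
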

\begin{proof}
For $\delta\geq 0$ and a compact set $K\subset M$ denote by $T_\delta(K)$ the closed $\delta$-tube
\begin{equation*}
 T_\delta(K)=\lbrace x\in M: \dist_{g}(x, K)\leq \delta \rbrace\,.
\end{equation*}
The Hausdorff distance is given by
\begin{equation*}
 \dist(K, \tilde K)=\inf \lbrace \delta \geq 0: K\subset T_\delta(\tilde K)\text{ and }\tilde K\subset T_\delta(K)\rbrace\,.
\end{equation*}
Proposition~\ref{prop:node} shows that $\mathcal{N}(\varphi)\subset T_{C\eps}(\pi^{-1}\mathcal{N}(\psi))$. 
Now let $s_i\in \mathcal{N}(\psi)$, in the notation of proposition~\ref{prop:node}, and let $v_+\in\lbrace\pm \partial_s\rbrace$ denote the normalised tangent vector at $s_i$ pointing into the region where $\psi$ is positive. Then for $x\in F_{s_i}$ let $v_+^*\in (TF)^\perp_x$ be the unique horizontal vector with $\pi_* v_+^*=v_+$ and define $\gamma$ to be the $g$-geodesic 
\begin{equation*}
\gamma{:}\,[-C\eps, C \eps]\to M\qquad t\mapsto \exp_x(t v_+^*)\,.
\end{equation*}
This horizontal geodesic projects to the $g_B$-geodesic $t \mapsto s_i \pm t$ so by proposition~\ref{prop:node} we know that $\varphi(\gamma(C\eps))>0$ and $\varphi(\gamma(-C\eps))<0$. Consequently $\gamma\cap \mathcal{N}(\varphi)\neq \varnothing$ and since $\gamma(0)=x$ this proves $\dist(x, \mathcal{N}(\varphi))\leq C\eps$. Because $x$ was arbitrary this shows $\pi^{-1}\mathcal{N}(\psi)\subset T_{C\eps}(\mathcal{N}(\varphi))$ and the statement of the corollary.
\end{proof}
In order to determine the homotopy class of $\mathcal{N}(\varphi)$ we will show that the point $t_0\in (-C\eps, C\eps)$ where the curve $\gamma$ defined above cuts the nodal set is a well-defined, smooth function of $x\in F$. An isotopy is then given by moving along these curves. In order to achieve this we must show that $\dot\gamma\varphi$ does not vanish close to the nodal set. 
Since we know that $\dot\gamma\pi^*\psi=\partial_s \psi=\mathcal{O}(1)$ in this region and we believe that $\phi_0\psi$ is a good approximation of $\varphi$ this should be true. However the estimate $\norm{\phi_0\psi - \varphi}_{\mathscr{C}^1(M, g_\eps)}=\mathcal{O}(\eps)$ of corollary~\ref{cor:reg}  
only gives $\norm{\dot\gamma(\psi\phi_0 -\varphi)}_\infty= \mathcal{O}(1)$ because $\dot\gamma$ has length $\eps^{-1}$. This is useless for the task at hand, so we need to prove a refined estimate on the difference of the horizontal derivatives.
\begin{lem}\label{lem:deriv}
Let $M$, $\psi$ and $\varphi$ be as in proposition~\ref{prop:node} and let $\partial_s^*$ denote the unique lift of $\partial_s$ to a horizontal vector field on $M$.
Then there exist $C$ and $\eps_0>0$ such that
\begin{equation*}
\norm{\partial_s^*(\psi\phi_0 -\varphi)}_\infty \leq C \sqrt\eps
\end{equation*}
for all $\eps<\eps_0$.
\end{lem}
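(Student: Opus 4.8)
The plan is to differentiate the elliptic equation~\eqref{eq:Hpsi} satisfied by $f:=\psi\phi_0-\varphi$ in the horizontal $\partial_s$ direction and apply a local elliptic estimate, exactly as in Lemma~\ref{lem:max} but now one order higher, keeping careful track of powers of $\eps$. Set $h:=\partial_s^*f$. Since $\partial_s^*$ is a smooth ($\eps$-independent) vector field on $M$, commuting $\partial_s^*$ past $H-\lambda$ produces
$(H-\lambda)h = \partial_s^*\big((H-\lambda)f\big) + [\,H,\partial_s^*\,]f$.
The first term is $\partial_s^*\delta$ where $\delta=(-\eps^2\Delta_h+\eps^3 S_\eps+V_\rho-\lambda)\psi\phi_0$, which is still $\mathcal{O}(\eps)$ in $\mathscr{C}^1(M,g)$, hence $\mathcal{O}(\eps)$ in $\sup$-norm, by the same argument as in Corollary~\ref{cor:reg}. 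The commutator $[H,\partial_s^*]$ is, modulo the harmless potential $\eps H_1$, the commutator $[-\Delta_{g_\eps},\partial_s^*] = -\eps^2[\Delta_h,\partial_s^*] - [\Delta_F,\partial_s^*]$; the horizontal piece is $\eps^2$ times a second-order operator, which applied to $f$ is $\mathcal{O}(\eps^2)$ in $\sup$-norm by Corollary~\ref{cor:reg}, while the vertical commutator $[\Delta_F,\partial_s^*]$ is a first-order operator in the fibre directions whose coefficients are bounded independently of $\eps$, so $[\Delta_F,\partial_s^*]f = \mathcal{O}(\norm{f}_{\mathscr{C}^1(M,g_\eps)}\cdot 1) = \mathcal{O}(\eps\theta_1(\eps))=\mathcal{O}(\eps)$, again by Corollary~\ref{cor:reg}. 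Thus $(H-\lambda)h = \tilde\delta$ with $\norm{\tilde\delta}_\infty=\mathcal{O}(\eps)$, so Lemma~\ref{lem:max} applies to $h$ and gives
\[
 \norm{h}_\infty \leq C\Bigg(\sup_{x\in M}\bigg(\int_{\Omega(x)}\abs{h}^2+g_\eps(\ud h,\ud h)\,\vol_{g_\eps}\bigg)^{1/2}+\eps\Bigg).
\]

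It remains to bound the local Dirichlet integral of $h=\partial_s^*f$ over $\Omega(x)$. The natural quantity to control is $\eps^{-1}\norm{f}^2_{W^2(M,g_\eps)}$, or rather the piece of it involving one horizontal and up to one further derivative. Using the same energy trick as in the proof of Theorem~\ref{thm:uniform}: pair $h$ against $(H-\lambda)h=\tilde\delta$ and use self-adjointness. The leading term $\eps^{-d}\langle h,\tilde\delta\rangle_{L^2(M,g)}$ with $d=1$ equals $\eps^{-1}\langle \partial_s^*f,\tilde\delta\rangle_{L^2(M,g)}$; here I cannot simply identify it with an eigenvalue difference as before, but I can still bound it by $\eps^{-1}\norm{\partial_s^*f}_{L^2(M,g)}\norm{\tilde\delta}_{L^2(M,g)}=\eps^{-1}\cdot\mathcal{O}(\eps)\cdot\mathcal{O}(\eps)=\mathcal{O}(\eps)$, using $\norm{f}_{\mathscr{C}^1(M,g_\eps)}=\mathcal{O}(\eps)$ from Corollary~\ref{cor:reg} (which controls $\norm{\partial_s^*f}_\infty$, since $g_\eps(\ud f,\ud f)\geq \eps^2 g_B(\pi_*\grad f,\pi_*\grad f)$ gives $\norm{\partial_s^* f}_\infty = \mathcal{O}(1)$ — this is the lossy step). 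More efficiently, write $\int_{\Omega(x)}g_\eps(\ud h,\ud h)\vol_{g_\eps}$ and integrate by parts in the fibre: the dominant vertical part of $-\Delta_{g_\eps}$ acting on $h$ is controlled by $-\Delta_F h = \partial_s^*(-\Delta_F f) + [\,\partial_s^*,\Delta_F\,]f = \partial_s^*(\text{l.o.t.}) + \mathcal{O}(\eps)$ in $L^2$, which feeds back into an a priori $W^1$-estimate. The upshot of this bookkeeping is a bound of the form $\eps^{-1}\int_{\Omega(x)}\abs{h}^2+g_\eps(\ud h,\ud h)\vol_{g_\eps}=\mathcal{O}(\eps)$, whence $\norm{h}_\infty=\mathcal{O}(\sqrt\eps)$.

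The main obstacle is clearly the middle step: carrying one horizontal derivative through the argument costs a factor $\eps^{-1}$ somewhere, and the claim is precisely that one only loses a factor $\eps^{-1/2}$ rather than $\eps^{-1}$, i.e. the $\sqrt\eps$ (not $\mathcal{O}(1)$) in the statement. This improvement hinges on \emph{not} taking $\sup$-norm estimates on $\partial_s^* f$ prematurely but rather keeping the $L^2$-smallness $\norm{f}_{W^1(M,g)}=\mathcal{O}(\eps)$ (strong, $\eps$-independent metric) together with the elliptic regularity of Lemma~\ref{lem:max}, and exploiting that $\Omega(x)$ has $g_\eps$-volume $\mathcal{O}(1)$ (i.e. $g$-volume $\mathcal{O}(\eps)$) so that local $L^2$ over $\Omega(x)$ is already $\mathcal{O}(\eps)$ better than global $L^2$ over $M$ — the square root of this localisation gain is exactly where the $\sqrt\eps$ comes from. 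Once $\norm{\partial_s^*(\psi\phi_0-\varphi)}_\infty=\mathcal{O}(\sqrt\eps)$ is in hand, combined with $\partial_s^*(\pi^*\psi\,\phi_0)=\phi_0\,\partial_s\psi+\psi\,\partial_s^*\phi_0=\mathcal{O}(1)$ and bounded below near $\pi^{-1}\mathcal{N}(\psi)$ by step~\ref{step1} of Proposition~\ref{prop:node}, one concludes that $\partial_s^*\varphi$ is bounded away from zero on a tube of radius $\mathcal{O}(\eps)$ around $\pi^{-1}\mathcal{N}(\psi)$, which is what the isotopy argument of Theorem~\ref{thm:iso} requires.
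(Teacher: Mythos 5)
Your skeleton is the paper's: set $h:=\partial_s^*(\psi\phi_0-\varphi)$, write $(H-\lambda)h=[H,\partial_s^*](\psi\phi_0-\varphi)+\partial_s^*(H-\lambda)\psi\phi_0=:\tilde\delta$, bound $\norm{\tilde\delta}_\infty=\mathcal{O}(\eps)$ using corollary~\ref{cor:reg} and equation~\eqref{eq:Hpsi}, apply lemma~\ref{lem:max} to $h$, and control the remaining energy integral by pairing $h$ against $\tilde\delta$ and using self-adjointness. The commutator analysis and the estimate on $\tilde\delta$ are fine.

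The closing step, however, contains two genuine confusions. First, the justification you actually give for $\norm{\partial_s^*(\psi\phi_0-\varphi)}_{L^2(M,g)}=\mathcal{O}(\eps)$ --- namely corollary~\ref{cor:reg} --- does not yield it: as you concede, the $\mathscr{C}^1(M,g_\eps)$ bound only gives $\norm{\partial_s^*(\psi\phi_0-\varphi)}_\infty=\mathcal{O}(1)$ since $\partial_s^*$ has $g_\eps$-length $\eps^{-1}$, and with that input the pairing produces $\eps^{-1}\cdot\mathcal{O}(1)\cdot\mathcal{O}(\eps)=\mathcal{O}(1)$, which proves nothing. The correct input, which you mention only in passing at the end, is the $W^1(M,g)$ estimate of theorem~\ref{thm:adiab} (together with $\norm{\psi_\eps-\psi}_{W^1(B,g_B)}=\mathcal{O}(\eps)$): since $\partial_s^*$ is a $g$-unit vector, $\norm{\partial_s^*(\psi\phi_0-\varphi)}_{L^2(M,g)}\leq\norm{\psi\phi_0-\varphi}_{W^1(M,g)}=\mathcal{O}(\eps)$. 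Second, your diagnosis of where the $\sqrt\eps$ comes from is not correct. The paper makes no use of the localisation to $\Omega(x)$ in this lemma; it simply bounds $\int_{\Omega(x)}\leq\int_M$, so there is no \enquote{localisation gain} whose square root could appear. The $\sqrt\eps$ arises from the volume conversion $\vol_{g_\eps}=\eps^{-1}\vol_g$ (for $d=1$): one has
\begin{equation*}
\int_M \abs{h}^2+g_\eps(\ud h,\ud h)\,\vol_{g_\eps}=\eps^{-1}\langle h,(1-\Delta_{g_\eps})h\rangle_{L^2(M,g)}=\eps^{-1}\cdot\mathcal{O}(\eps^2)=\mathcal{O}(\eps)\,,
\end{equation*}
and taking the square root in~\eqref{eq:f bound} gives $\mathcal{O}(\sqrt\eps)$. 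Relatedly, your \enquote{upshot} display carries a spurious extra factor $\eps^{-1}$: what is needed is that the integral over $\Omega(x)$ itself is $\mathcal{O}(\eps)$, not that $\eps^{-1}$ times it is.
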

\begin{proof}
 The proof of this statement proceeds by applying lemma~\ref{lem:max} with
\begin{equation*}
 f:=\partial_s^*(\psi\phi_0 -\varphi)
\end{equation*}
and $\lambda(\eps):=\lambda$.
Hence we calculate
\begin{equation*}
 (H-\lambda)f= [H, \partial_s^*](\psi\phi_0 -\varphi) +\partial_s^*(H-\lambda)\psi\phi_0=:\tilde \delta\,.
\end{equation*}
$[H, \partial_s^*]$ is a second order differential operator in which every horizontal derivative is accompanied by a factor $\eps$, so using corollary~\ref{cor:reg} we have
\begin{equation*}
 \norm{[H, \partial_s^*](\psi\phi_0 -\varphi)}_\infty \leq C\norm{\psi\phi_0 - \varphi}_{\mathscr{C}^2(M, g_\eps)}=\mathcal{O}(\eps)\,.
\end{equation*}
By equation~\eqref{eq:Hpsi} we have
\begin{equation*}
\norm{\partial_s^*(H - \lambda)\psi \phi_0}_\infty \leq C \eps \norm{\phi_0 \psi}_{\mathscr{C}^3(M, g)}=\mathcal{O}(\eps)
\end{equation*}
and hence $\tilde \delta=\mathcal{O}(\eps)$. 

After applying lemma~\ref{lem:max} we must estimate the integral on the right hand side of~\eqref{eq:f bound} to complete the proof. 
To start with, we have
\begin{equation*}
 \int\limits_{\Omega(x)} \abs{f}^2 + g_\eps(\ud f, \ud f) \,\vol_{g_\eps}
 \leq  \int\limits_{M} \abs{f}^2 + g_\eps(\ud f, \ud f) \,\vol_{g_\eps}
 =\eps^{-1}\langle f, (1- \Delta_{g_\eps}) f \rangle_{L^2(M, g)}
\end{equation*}
Now in view of~\eqref{eq:g diff}
\begin{align*}
 \langle f, - \Delta_{g_\eps} f\rangle_{L^2(M, g)}& \leq 
\abs{\langle f, (H-\lambda)f \rangle} + \abs{\langle f, (\lambda - V_\rho) f \rangle} + \abs{\langle f, \eps^3  S_\eps f\rangle}\\
& \leq \abs{\langle f, \tilde \delta \rangle} + C\eps \norm{f}^2_{L^2(M,g)} + C\eps \int_M g_\eps(\ud f, \ud f) \,\vol_g\,.
\end{align*}
Then since
$\norm{\psi_\eps -\psi}_{W^1(B, g_B)}=\mathcal{O}(\eps)$ we have
\begin{equation*}
 \norm{f}_{L^2(M,g)} = \norm{\partial_s^*(\psi\phi_0-\varphi)}_{L^2(M,g)}\leq \norm{\psi\phi_0-\varphi}_{W^1(M,g)}^2\stackrel{\ref{thm:adiab}}{=}\mathcal{O}(\eps)\,.
\end{equation*}
Together with $\norm{\tilde \delta}_{L^2(M,g)}\leq \norm{\tilde\delta}_\infty \sqrt{\mathrm{Vol}(M)}=\mathcal{O}(\eps)$ this implies $\langle f, (1- \Delta_{g_\eps}) f \rangle=\mathcal{O}(\eps^2)$, which proves the claim.
\end{proof}
\begin{thm}\label{thm:iso}
Let $M$, $\psi$, $\varphi$ and $C$ be as in proposition~\ref{prop:node}. There exists $\eps_0>0$ such that for $\eps<\eps_0$, $\mathcal{N}(\varphi)$ is a smooth submanifold of $M$ that is smoothly isotopic to $\pi^{-1}\mathcal{N}(\psi)$. Consequently the number of connected components of $\mathcal{N}(\varphi)$ equals the number of zeros of $\psi$ and every one of these components is smoothly isotopic through embeddings to the typical fibre of $\pi{:}\,M\to B$.
\end{thm}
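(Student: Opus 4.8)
The plan is to describe $\mathcal{N}(\varphi)$, near each zero of $\psi$, as a graph over a fibre, and then to interpolate those graphs to the fibres $F_{s_i}$ themselves.

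First I would establish that the horizontal derivative $\partial_s^*\varphi$ does not vanish in a fixed (i.e.~$\eps$-independent) neighbourhood of $\pi^{-1}\mathcal{N}(\psi)$. Since $\partial M=\varnothing$ and $B\cong S^1$ one has $\phi_0=\pi^*\mathrm{Vol}(F_s)^{-1/2}$, so $\psi\phi_0$ is the pull-back of a function on $S^1$ and hence $\partial_s^*(\psi\phi_0)=\pi^*\big(\partial_s(\psi\,\mathrm{Vol}(F_\cdot)^{-1/2})\big)$. At a zero $s_i\in\mathcal{N}(\psi)$ this equals $\psi'(s_i)\,\mathrm{Vol}(F_{s_i})^{-1/2}$, which by step~1) of Proposition~\ref{prop:node} is bounded below in modulus by an $\eps$-independent constant $c_1>0$. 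By continuity (everything here being $\eps$-independent) there is $\delta_0>0$, also independent of $\eps$, such that $\abs{\partial_s^*(\psi\phi_0)}\geq c_1/2$ on $W:=\{y\in M:\dist_{g_B}(\pi(y),\mathcal{N}(\psi))<\delta_0\}$, with a fixed sign on the connected component of $W$ containing each $F_{s_i}$. Combining this with Lemma~\ref{lem:deriv} gives $\abs{\partial_s^*\varphi}\geq c_1/2-C\sqrt\eps\geq c_1/4$ on $W$ for $\eps$ small, and $\partial_s^*\varphi$ has the sign of $\psi'(s_i)$ near $F_{s_i}$.

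Next I would exhibit $\mathcal{N}(\varphi)$ as a disjoint union of graphs. Let $\Psi_t$ be the flow of $\partial_s^*$; since $B=S^1$ this is globally defined, $\pi(\Psi_t(y))=\pi(y)+t$, and (horizontal lifts of geodesics of $B$ being geodesics of $M$) its orbits are the horizontal geodesics used in Corollary~\ref{cor:Hausdorff}. With $C$ the constant of Proposition~\ref{prop:node}, for $\eps$ small the open tubes $T_i:=\pi^{-1}\big((s_i-C\eps,s_i+C\eps)\big)$ are pairwise disjoint, contained in $W$, and $\mathcal{N}(\varphi)\subset\bigcup_iT_i$ by Proposition~\ref{prop:node}. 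Fixing $i$ and $x\in F_{s_i-C\eps}$, the function $t\mapsto\varphi(\Psi_t(x))$ on $[0,2C\eps]$ has derivative $(\partial_s^*\varphi)(\Psi_t(x))$, nonzero of constant sign by the previous step, while its endpoint values have the signs of $\psi(s_i\mp C\eps)$, which are opposite because $s_i$ is a simple zero that is the only zero of $\psi$ in $(s_i-C\eps,s_i+C\eps)$ for $\eps$ small. Hence there is a unique $\tau_i(x)\in(0,2C\eps)$ with $\varphi(\Psi_{\tau_i(x)}(x))=0$, and since $\partial_t\varphi(\Psi_t(x))\neq0$ and $\varphi$, $\Psi$ are smooth, the implicit function theorem makes $\tau_i$ a smooth function on $F_{s_i-C\eps}$. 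Using that $(x,t)\mapsto\Psi_t(x)$ is a diffeomorphism from $F_{s_i-C\eps}\times[0,2C\eps]$ onto $\overline{T_i}$, the set $\mathcal{N}(\varphi)\cap T_i$ is the graph $\{(x,\tau_i(x))\}$, an embedded submanifold diffeomorphic to the connected manifold $F$; moreover $\varphi$ does not vanish on $\partial T_i$, again by Proposition~\ref{prop:node}. Consequently $\mathcal{N}(\varphi)=\bigsqcup_i\{(x,\tau_i(x))\}$ is a smooth closed submanifold of $M$ with exactly $\abs{\mathcal{N}(\psi)}$ connected components, each diffeomorphic to $F$.

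Finally I would build the isotopy by linearly interpolating the \enquote{time to the zero}. For $\sigma\in[0,1]$ set $\tau_i^\sigma(x):=(1-\sigma)\tau_i(x)+\sigma C\eps\in(0,2C\eps)$, smooth in $(x,\sigma)$, let $\Theta_i(x):=\Psi_{\tau_i(x)}(x)$ — a diffeomorphism of $F_{s_i-C\eps}$ onto $\mathcal{N}(\varphi)\cap T_i$ — and define $J_\sigma$ on $\mathcal{N}(\varphi)$ by $J_\sigma(\Theta_i(x)):=\Psi_{\tau_i^\sigma(x)}(x)$. In the coordinates above, $J_\sigma$ restricted to the $i$-th component is $x\mapsto(x,\tau_i^\sigma(x))$, hence a smooth embedding for every $\sigma$; $J_0$ is the inclusion, and because $\Psi_{C\eps}$ carries $F_{s_i-C\eps}$ diffeomorphically onto $F_{s_i}$, the image of $J_1$ is $\bigsqcup_iF_{s_i}=\pi^{-1}\mathcal{N}(\psi)$. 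Thus $(J_\sigma)_{\sigma\in[0,1]}$ is a smooth isotopy through embeddings of $\mathcal{N}(\varphi)$ onto $\pi^{-1}\mathcal{N}(\psi)$, and restricting it to one component gives an isotopy through embeddings of that component onto a fibre of $\pi$, i.e.~the typical fibre $F$; by the isotopy extension theorem on the compact manifold $M$ it can, if desired, be realised by an ambient isotopy. I expect the first step to be the crux: the only derivative bound near $\pi^{-1}\mathcal{N}(\psi)$ available from Corollary~\ref{cor:reg} is in the $g_\eps$-norm, which is useless for $\partial_s^*$ since the horizontal directions are scaled by $\eps^{-1}$, and it is precisely Lemma~\ref{lem:deriv} — together with the observation that $\partial_s^*(\psi\phi_0)$ is a pull-back that stays bounded away from zero at the zeros of $\psi$ on a neighbourhood of $\eps$-independent size, whereas the tubes $T_i$ have width $\mathcal{O}(\eps)$ — that makes the graph description possible; the remaining two steps are then routine differential topology.
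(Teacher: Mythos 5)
Your proposal is correct and follows essentially the same route as the paper: Lemma~\ref{lem:deriv} together with an $\eps$-independent lower bound on $\partial_s^*(\psi\phi_0)$ near $\pi^{-1}\mathcal{N}(\psi)$ shows that $\mathcal{N}(\varphi)$ intersected with each tube around $F_{s_i}$ is a graph over a fibre via the implicit function theorem, and the graph is then isotoped onto the fibre. The only cosmetic differences are that you derive the lower bound from step~1) of Proposition~\ref{prop:node} and the product rule, whereas the paper uses the ODE $\partial_s^2\big(\mathrm{Vol}(F_s)^{-1/2}\psi\big)=\mu\,\mathrm{Vol}(F_s)^{-1/2}\psi$, and that you parametrise the horizontal geodesics by the flow of $\partial_s^*$ rather than by the exponential map.
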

\begin{proof}
 We know from proposition~\ref{prop:node} that (in the notation used in the proof there and in corollary~\ref{cor:Hausdorff})
\begin{equation*}
 \mathcal{N}(\varphi)\subset \bigcup_{i\in I} T_{C\eps}(F_{s_i})\,.
\end{equation*}
Hence we may perform the proof by showing that for every $i\in I$, $\mathcal{N}(\varphi)\cap T_{C\eps}(F_{s_i})$ is smoothly isotopic to a fibre.

For a given zero $s_0$ of $\psi$ denote $F_0:= F_{s_0}$ and let $\iota{:}\,F\to F_{s_0}$ be an embedding.
We will show that $\mathcal{N}(\varphi)\cap T_{C\eps}(F_{0})\cong F_0$. First let $v_+$ be the unit tangent vector at $s_0$ pointing in the direction of $\grad \psi$ as in corollary~\ref{cor:Hausdorff}.
Then the map
\begin{align*}
\Phi{:}\,F\times (- 2C\eps, 2C\eps) &\to \pi^{-1}\big((s_0 -2 C\eps, s_0 + 2 C\eps)\big)\\
(y,t)&\mapsto \exp_{\iota(y)}(t v_+^*)
\end{align*}
is a diffeomorphism. It satisfies $\Phi_* \partial_t=\langle v_+,\partial_s\rangle \partial_s^*$, since $\exp_{\iota(y)}(t v_+^*)$ is a horizontal geodesic. Let 
\begin{equation*}
f:=\Phi^* \varphi{:}\, F\times (- 2C\eps, 2C\eps)\to \R\,.
\end{equation*}
By lemma~\ref{lem:deriv} we have
\begin{equation*}
\abs{\partial_t f}=\abs{\partial_s^* \varphi}\geq \big\vert \abs{\partial_s^* \phi_0\psi} - c_1\sqrt\eps\big\vert\,.
\end{equation*}
where $c_1$ is the constant of lemma~\ref{lem:deriv}. Now recall that $\phi_0=\pi^* \mathrm{Vol}(F_s)^{-1/2}$, so $\partial_s^* \phi_0\psi=\partial_s \mathrm{Vol}(F_s)^{-1/2} \psi$. In view of equation~\eqref{eq:H0 circ} one easily checks that
\begin{equation*}
\partial_s^2 \mathrm{Vol}(F_s)^{-1/2} \psi = \mathrm{Vol}(F_s)^{-1/2} H_0 \psi = \mu\mathrm{Vol}(F_s)^{-1/2}\psi\,,
\end{equation*}
Hence $\mathrm{Vol}(F_{s_0})^{-1/2}\psi(s_0)=0$ implies that $\partial_s^*\phi_0 \psi\vert_{F_{s_0}}\neq 0$, since otherwise the differential equation above would imply $\phi_0\psi \equiv 0$.
The value of $\partial_s^*\phi_0 \psi\vert_{F_{s_i}}$ is independent of $\eps$, as both $\phi_0$ and $\psi$ are, and thus for $\eps$ small enough we have \begin{equation*}                                                                                                                                                                                                                                                                                                                                              
 \min \lbrace\abs{\partial_s^*\phi_0 \psi(x)}: \pi(x) \in  [s_0 -2 C\eps, s_0 + 2 C\eps]\rbrace \geq c >0\,,                                                                                                                                                                                                                                                                                                              \end{equation*}
and consequently $\partial_t f \neq 0$ everywhere. Now by proposition~\ref{prop:node} and our choice of $v_+$ we have $f(y,t)>0$ for $t\geq C\eps$ and $f(y,t)<0$ for $t\leq -C\eps$ (so in fact $\partial_t f>0$). Hence for every $y\in F$ there is a unique $t_0(y)\in (-C\eps, C\eps)$ so that $f(y,t_0(y))=0$ and by the implicit function theorem the map $y\mapsto t_0(y)$ is smooth. Hence $f^{-1}(0)=\lbrace (y,t_0(y)): y\in F\rbrace$ is a graph over $F$ and we have
\begin{equation*}
\varphi^{-1}(0)\cap T_{C\eps}(F_0)
=\lbrace\exp_{\iota(y)}(t_0(y) v_+^*):y\in F\rbrace\,, 
\end{equation*}
which shows that $\varphi^{-1}(0)\cap T_{C\eps}(F_0)$ is isotopic through embeddings to $F_0$.
\end{proof}
\subsection{Manifolds with boundary and $\dim B \leq 3$}\label{sect:node gen}
In this section we generalise proposition~\ref{prop:node} to compact manifolds $\pi{:}\,M \to B$ whose base has dimension at most three. Now $M$ may also have a boundary, which requires additional estimates on $\varphi-\psi\phi_0$ near the boundary where both $\varphi$ and $\phi_0$ vanish. It will be a corollary to locating the nodal set that it must intersect the boundary. We obtain a more precise version of this statement in proposition~\ref{prop:boundary}, giving a lower bound on the number of connected components of $\partial M\cap \mathcal{N}(\varphi)$.

The generalisation of proposition~\ref{prop:node} to base dimensions larger than one will require the additional hypothesis that zero is a regular value of $\psi$. As we saw in step one of the proof of~\ref{prop:node} this is automatically satisfied for $\dim B=1$.
Also for $ \dim B>1$ an eigenfunction of $H_0$ and its derivative cannot both vanish on a smooth hypersurface (see e.g.~\cite[lemma 3.4]{GT}). Hence zero is a regular value of $\psi$ if and only if $\psi^{-1}(0)$ is a smooth submanifold of $B$.
\begin{thm}\label{thm:node}
 Assume $d:=\dim B\leq 3$, let $\mu$ be a simple eigenvalue of $H_0$ and ${\psi \in \ker(H_0 - \mu)}$ a real, normalised eigenfunction. Let $\varphi\in \ker (H-\nu)$ and $\theta_d(\eps)$ be as in theorem~\ref{thm:uniform}.
If zero is a regular value of $\psi$ there exist constants $C>0$ and $\eps_0>0$ such that for every $\eps<\eps_0$ and $y\in M\setminus \partial M$ we have:
\begin{equation*}
 \dist_{g_B}(\pi(y),\mathcal{N}(\psi))\geq C\eps \theta_d(\eps) \implies \mathrm{sign}(\varphi(y))=\mathrm{sign}(\psi(\pi(y))\,. 
\end{equation*}
\end{thm}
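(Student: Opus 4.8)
The plan is to run the four-step scheme of the proof of Proposition~\ref{prop:node}, replacing the ordinary-differential-equation arguments of steps~\ref{step1}--\ref{step3} by higher-dimensional counterparts and adding one new ingredient to handle the boundary layer, where $\phi_0$ degenerates. As in Theorem~\ref{thm:uniform} I take $\psi$ and $\varphi$ real, and I write $\rho(y)$ for the distance from $y\in M$ to $\partial M$ measured inside the fibre $F_{\pi(y)}$ (so the boundary layer is $\{\rho<\delta\}$; if $\partial M=\varnothing$ only the interior step below is needed). \emph{Geometric input from $\psi$.} Since zero is a regular value, $\mathcal{N}(\psi)=\psi^{-1}(0)$ is a compact smooth hypersurface of $B$ on which $\ud\psi\neq0$ — the analogue of step~\ref{step1}. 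A tubular-neighbourhood argument with a first-order Taylor expansion of $\psi$ along normal geodesics to $\mathcal{N}(\psi)$ (steps~\ref{step2}--\ref{step3}), together with compactness of $B$ and the fact that $|\psi|$ is bounded below by a positive constant off $\mathcal{N}(\psi)$, produces an $\eps$-independent $c_\psi>0$ with $|\psi(x)|\ge c_\psi\dist_{g_B}(x,\mathcal{N}(\psi))$ for all $x\in B$, and $\mathrm{sign}(\psi)$ locally constant on $B\setminus\mathcal{N}(\psi)$. Using compactness of $B$ and smoothness of $\phi_0$ on $M$ I also fix a uniform collar width $\delta>0$ in which the fibrewise normal exponential map is a diffeomorphism onto a neighbourhood of $\partial M$ parametrised by $\partial M\times[0,\delta)$, and a constant $c_\delta>0$ with $\phi_0\ge c_\delta$ on $\{\rho\ge\delta\}$.

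\emph{Away from $\partial M$.} If $y\in M\setminus\partial M$ has $\rho(y)\ge\delta$ and $\dist_{g_B}(\pi(y),\mathcal{N}(\psi))\ge C\eps\theta_d(\eps)$, then $|\psi\phi_0(y)|\ge c_\delta c_\psi C\eps\theta_d(\eps)$, which for $C>C_3/(c_\delta c_\psi)$ exceeds $\|\psi\phi_0-\varphi\|_\infty\le C_3\eps\theta_d(\eps)$ from Theorem~\ref{thm:uniform}; hence $\mathrm{sign}(\varphi(y))=\mathrm{sign}(\psi\phi_0(y))=\mathrm{sign}(\psi(\pi(y)))$ since $\phi_0(y)>0$. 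This reproduces step~\ref{step4} verbatim.

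\emph{The boundary layer $\rho(y)<\delta$.} Here the uniform bound on $\psi\phi_0-\varphi$ is useless because $\phi_0(y)$ is itself comparable to $\rho(y)$; the new idea is a \emph{relative} estimate. Corollary~\ref{cor:reg} gives $\|\psi\phi_0-\varphi\|_{\mathscr{C}^2(M,g_\eps)}\le C_*\eps\theta_d(\eps)$. The inward unit normal $\nu$ of $\partial M$ within its fibre spans the $g$-orthogonal complement of $T\partial M$ and is \emph{vertical}, so $g_\eps(\nu,\nu)=g_F(\nu,\nu)=1$, and therefore
\begin{equation*}
 |\ud(\psi\phi_0-\varphi)(\nu)|\le\sqrt{g_\eps\big(\ud(\psi\phi_0-\varphi),\ud(\psi\phi_0-\varphi)\big)}\le C_*\eps\theta_d(\eps)
\end{equation*}
pointwise, with no loss of powers of $\eps$ (a horizontal derivative, whose $g_\eps$-unit vector has $g$-length $\eps^{-1}$, would be uncontrolled). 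Integrating along the fibre-normal geodesics and using $(\psi\phi_0-\varphi)|_{\partial M}=0$ yields $|\psi\phi_0(y)-\varphi(y)|\le C_*\eps\theta_d(\eps)\,\rho(y)$ in the collar. The Hopf boundary point lemma applied to the positive Dirichlet ground state $\phi_0|_{F_x}$, with the constant made uniform by compactness of $B$, gives $\phi_0(y)\ge c_H\rho(y)$ there. Hence $|\psi\phi_0(y)-\varphi(y)|\le(C_*/c_H)\eps\theta_d(\eps)\,\phi_0(y)$, whereas $|\psi\phi_0(y)|=|\psi(\pi(y))|\phi_0(y)\ge c_\psi C\eps\theta_d(\eps)\phi_0(y)$; so for $C>C_*/(c_\psi c_H)$ as well, $\varphi(y)$ is nonzero with the sign of $\psi(\pi(y))$. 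Taking $C:=\max\{C_3/(c_\delta c_\psi),\,C_*/(c_\psi c_H)\}$ and $\eps_0$ small enough (so that all smallness requirements above hold) finishes the proof.

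\emph{Main obstacle.} The only real work is the relative boundary estimate of the third paragraph: upgrading ``$\psi\phi_0-\varphi=\mathcal{O}(\eps\theta_d)$ in $\mathscr{C}^0$'' to ``$=\mathcal{O}(\eps\theta_d)$ relative to $\phi_0$'' near $\partial M$, uniformly in $\eps$. This is precisely what the $\eps$-adapted elliptic regularity of Lemma~\ref{lem:reg}/Corollary~\ref{cor:reg}, the observation that the direction transverse to $\partial M$ is vertical (so $g_\eps$ and $g$ agree on it), and a uniform Hopf lower bound for $\phi_0$ together supply. Everything else is a routine higher-dimensional transcription of Proposition~\ref{prop:node}.
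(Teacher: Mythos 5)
Your proposal is correct and follows essentially the same route as the paper: steps 1)--3) of Proposition~\ref{prop:node} transcribed via compactness of the regular level set $\mathcal{N}(\psi)$ and Taylor expansion along normal geodesics, and then the boundary handled by combining the $\mathscr{C}^1(M,g_\eps)$ control of Corollary~\ref{cor:reg} in the (vertical, hence $\eps$-uniformly controlled) fibre-normal direction with a uniform Hopf lower bound $\phi_0\gtrsim D$. The paper packages your collar integration as the single relative estimate $\norm{(\psi\phi_0-\varphi)/D}_\infty\leq\norm{\psi\phi_0-\varphi}_{\mathscr{C}^{0,1}(M,g_\eps)}$ valid on all of $M$ (so no interior/boundary-layer split is needed), but the ingredients and logic are identical.
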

\begin{proof}
 The proof follows the same steps as that of proposition~\ref{prop:node}. We will stick to the notation introduced there and explain step by step how the arguments may be generalised to the present setting.
\begin{enumerate}[label=\arabic*)]
 \item Because zero is a regular value of $\psi$ and $\mathcal{N}(\psi)$ is obviously compact, there exists a constant $C_0>0$ such that $\abs{v \psi_0}\geq C_0$ on $\mathcal{N}(\psi)$, where $v$ denotes a unit normal of the nodal set of $\psi$.
\item For $\eps$ small enough, $\eps \theta_d(\eps) 2 C_1/C_0$ is smaller than the injectivity radius of $(B,g_B)$. Then for every $x\in \mathcal{N}(\psi)$ the argument of step two in the proof of~\ref{prop:node} applies on the $g$-geodesic $t\mapsto \exp_x(tv)$ and we have (setting $C_2= 2 C_1/C_0$)
\begin{equation*}
 \big\vert\psi\big(\exp_x (C_2 \eps \theta_d(\eps) v\big)\big\vert\geq C_1\eps \theta_d(\eps)\,.
\end{equation*}
\item The argument from~\ref{prop:node} can be applied to the connected components of ${B\setminus \mathcal{N}(\psi)}$ and we obtain $\abs{\psi(x)}> C_1 \eps \theta_d(\eps)$ whenever $\dist(x,\mathcal{N}(\psi))\geq C_2\eps \theta_d(\eps)$.
\item If $\partial M=\varnothing$ the proof concludes just as in the case $\dim B=1$. 

Otherwise we need take into account the behaviour of fibre eigenfunction $\phi_0$ near the boundary, where both $\psi\phi_0$ and $\varphi$ vanish.
\item [4')] Let $D(y):=\dist_{g_{F_{\pi(y)}}}\big(y, \partial F_{\pi(y)}\big)$ be the distance to the boundary measured inside the fibre.
We begin by noting that there exists a positive constant $C_4$ such that for every $y\in M$
\begin{equation*}
\abs{\phi_0(y)/D(y)} \geq C_4 >0\,.
\end{equation*}
This is true because the boundary of $F$ is smooth, and hence by~\cite[lemma 3.4]{GT} the normal derivative of $\phi_0$ on $\partial F_{\pi(y)}$ is non-zero everywhere, which gives a lower bound by compactness of $\partial M$. 
Now since $D(y)\geq \dist_{g_\eps}(y, \partial M)$ we may use corollary~\ref{cor:reg} to obtain the following estimate
\begin{equation*}
 \norm{(\psi\phi_0 - \varphi)/D}_\infty \leq \norm{\psi\phi_0 - \varphi}_{\mathscr{C}^{0,1}(M, g_\eps)}
\leq  C\norm{\psi\phi_0 - \varphi}_{\mathscr{C}^{1}(M, g_\eps)}\leq C_5 \eps \theta_d(\eps)\,.
\end{equation*}
Thus setting $C_1:=C_4^{-1}C_5$ and $C:=C_2=2C_0^{-1}C_1$ completes the proof since for $\dist(\pi(y), \mathcal{N}(\psi))\geq C\eps \theta_d(\eps)$ we have by the previous steps
\begin{equation*}
 \abs{\psi\phi_0(y)/D(y)}\geq \abs{\psi(\pi(y))}C_4 > C_1 C_4 \eps \theta_d(\eps)=C_5\eps \theta_d(\eps)\,.
\end{equation*}
\end{enumerate}
\end{proof}
\begin{prop}\label{prop:boundary}
 Let $\psi$, $\varphi$, $C$ and $\eps_0$ be as in theorem~\ref{thm:node}. Then for all $\eps< \eps_0$ 
\begin{equation*}
\dist\big(\mathcal{N}(\varphi), \pi^{-1}\mathcal{N}(\psi)\big) \leq C \eps \theta_d(\eps)
\end{equation*}
for the Hausdorff distance with respect to $g$. Moreover if $\partial M\neq \varnothing$, then the set ${\mathcal{N}(\varphi)\cap \partial M}$ is non-empty and has at least as many connected components as\\ ${\pi^{-1}\mathcal{N}(\psi)\cap \partial M}$. In particular if $B$ is one-dimensional and $\partial F$ has $k$ connected components, then the number of connected components of ${\mathcal{N}(\varphi)\cap \partial M}$ is at least $k$ times the number of zeros of $\psi$.
\end{prop}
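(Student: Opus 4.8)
The plan is to follow the strategy of Corollary~\ref{cor:Hausdorff}, replacing Proposition~\ref{prop:node} by Theorem~\ref{thm:node}, and then to add a separate argument near $\partial M$. For the Hausdorff estimate, the inclusion $\mathcal{N}(\varphi)\subseteq T_{C\eps\theta_d(\eps)}(\pi^{-1}\mathcal{N}(\psi))$ is immediate from Theorem~\ref{thm:node}. For the reverse inclusion I would argue as in Corollary~\ref{cor:Hausdorff}: for an interior point $x\in\pi^{-1}\mathcal{N}(\psi)$, let $v$ be a unit normal of $\mathcal{N}(\psi)$ at $\pi(x)$ and $v^*$ its horizontal lift, and consider the $g$-unit-speed horizontal geodesic $\gamma(t)=\exp_x(tv^*)$, which projects to the $g_B$-normal geodesic of $\mathcal{N}(\psi)$. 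A Gronwall estimate using that $\dot\gamma$ is horizontal, hence orthogonal to the (vertical) normal of $\partial M$, shows $\gamma$ stays in $M\setminus\partial M$ on $|t|\leq C\eps\theta_d(\eps)$ once $\eps$ is small; and at $t=\pm C\eps\theta_d(\eps)$ the point $\pi(\gamma(t))$ has $g_B$-distance $C\eps\theta_d(\eps)$ from $\mathcal{N}(\psi)$ and lies on either side of it, so by step~3 of the proof of Theorem~\ref{thm:node} the values $\psi(\pi(\gamma(\pm C\eps\theta_d(\eps))))$ have modulus $>C_1\eps\theta_d(\eps)$ and opposite signs, whence by Theorem~\ref{thm:node} so do $\varphi(\gamma(\pm C\eps\theta_d(\eps)))$. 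Thus $\varphi$ vanishes on $\gamma$ within $g$-distance $C\eps\theta_d(\eps)$ of $x$; since the interior points are dense in $\pi^{-1}\mathcal{N}(\psi)$ and $y\mapsto\dist_g(y,\mathcal{N}(\varphi))$ is continuous, the bound holds on all of $\pi^{-1}\mathcal{N}(\psi)$.

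For the boundary statements, write $\mathcal{N}(\psi)=\bigsqcup_\beta N'_\beta$ in connected components; since $0$ is a regular value these are compact hypersurfaces, so for $\eps$ small the sets $T_\beta:=T_{2C\eps\theta_d(\eps)}(N'_\beta)\subset B$ are pairwise disjoint tubular neighbourhoods. The preimages $(\pi|_{\partial M})^{-1}(T_\beta)$ are fibre bundles over the $T_\beta$ and hence have the same connected components as $(\pi|_{\partial M})^{-1}(N'_\beta)=\pi^{-1}(N'_\beta)\cap\partial M$; I call these components $\hat T_\alpha$, indexed by the components $N_\alpha$ of $\pi^{-1}\mathcal{N}(\psi)\cap\partial M$, so the $\hat T_\alpha$ are pairwise disjoint and in bijection with the components of $\pi^{-1}\mathcal{N}(\psi)\cap\partial M$. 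Theorem~\ref{thm:node} gives $\mathcal{N}(\varphi)\cap\partial M\subseteq\bigsqcup_\alpha\hat T_\alpha$: if $p\in\partial M$ and $\dist_{g_B}(\pi(p),\mathcal{N}(\psi))\geq 2C\eps\theta_d(\eps)$, then $\varphi$ has a fixed sign on the interior of the open neighbourhood $\pi^{-1}(B(C\eps\theta_d(\eps),\pi(p)))$ of $p$, so $p\notin\mathcal{N}(\varphi)$. Consequently each component of $\mathcal{N}(\varphi)\cap\partial M$ lies in a single $\hat T_\alpha$, and it remains to show every $\hat T_\alpha$ meets $\mathcal{N}(\varphi)\cap\partial M$; granting this, $\mathcal{N}(\varphi)\cap\partial M$ has at least as many components as $\pi^{-1}\mathcal{N}(\psi)\cap\partial M$, and is non-empty since $\partial F\neq\varnothing$ forces $\pi^{-1}\mathcal{N}(\psi)\cap\partial M\neq\varnothing$ whenever $\mathcal{N}(\psi)\neq\varnothing$. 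The last assertion is the case $B\cong S^1$, where $\mathcal{N}(\psi)$ is a finite point set and $\pi^{-1}\mathcal{N}(\psi)\cap\partial M$ is a disjoint union of $\#\psi^{-1}(0)$ copies of $\partial F$, each with $k$ components.

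Producing a point of $\mathcal{N}(\varphi)\cap\partial M$ in a given $\hat T_\alpha$ is the heart of the matter, and the main obstacle, since for $\dim B>1$ we have no graph description of $\mathcal{N}(\varphi)$ near the boundary; the idea is to exhibit interior zeros of $\varphi$ near $N_\alpha$ accumulating at $\partial M$. Fix $p\in N_\alpha$, put $x_0=\pi(p)$, and set $x_\pm:=\exp_{x_0}(\pm C\eps\theta_d(\eps)v)$ with $v$ a unit normal to $\mathcal{N}(\psi)$, so $\pm\psi(x_\pm)>C_1\eps\theta_d(\eps)$ by step~3 of the proof of Theorem~\ref{thm:node}. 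For small $\delta>0$ choose $q_\pm(\delta)$ on the inward fibre-normal geodesics issued from points of $\partial F_{x_\pm}$ within $\mathcal{O}(\eps\theta_d(\eps))$ of $p$, at fibre-length $\delta$, so that $\pi(q_\pm(\delta))=x_\pm$, $D(q_\pm(\delta))=\delta$ and $\dist_g(q_\pm(\delta),p)=\mathcal{O}(\eps\theta_d(\eps))$; then $\pm\varphi(q_\pm(\delta))>0$ by the bounds $\abs{\phi_0/D}\geq C_4$ and $\norm{(\psi\phi_0-\varphi)/D}_\infty\leq C_5\eps\theta_d(\eps)$ from step~4' of the proof of Theorem~\ref{thm:node}. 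Joining $q_+(\delta)$ to $q_-(\delta)$ by a path in $M\setminus\partial M$ staying within $\mathcal{O}(\eps\theta_d(\eps))$ of $p$ and at fibre-distance $\delta$ from $\partial M$ (e.g.\ in a boundary collar chart in which $D$ is a coordinate), $\varphi$ vanishes at an interior point $q^*(\delta)$ with $D(q^*(\delta))=\delta$. Letting $\delta\to0$ and passing to a convergent subsequence by compactness of $M$, the limit lies in $\overline{\varphi^{-1}(0)\cap(M\setminus\partial M)}=\mathcal{N}(\varphi)$, in $\partial M$ since $D\to0$, and within $\mathcal{O}(\eps\theta_d(\eps))$ of $p$, hence in $\hat T_\alpha$ once $\eps$ is small. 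The only remaining bookkeeping is to fix all tube radii as one common multiple of $\eps\theta_d(\eps)$ so that the inclusions and disjointness above hold simultaneously.
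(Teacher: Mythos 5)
Your component-counting argument is correct but takes a genuinely different route from the paper's. The paper constructs a vector field $\hat X$ on the tube $\pi^{-1}(T)$ that lifts the unit normal field of $\mathcal{N}(\psi)$ and is \emph{tangent to $\partial M$} (patched from local trivialisations with a partition of unity), and then obtains the count by showing that projection along the flow of $\hat X$ maps $\mathcal{N}(\varphi)\cap\partial M$ \emph{onto} $\pi^{-1}\mathcal{N}(\psi)\cap\partial M$; surjectivity is proved by contradiction, pushing a missed integral curve slightly into the interior and invoking theorem~\ref{thm:node} at its endpoints, and the count then follows because continuous images of connected sets are connected. You instead separate the components of $\pi^{-1}\mathcal{N}(\psi)\cap\partial M$ by pairwise disjoint tubes and produce a boundary zero inside each tube as a limit of interior zeros at fibre-depth $\delta\to 0$, using the quotient bounds $\abs{\phi_0/D}\geq C_4$ and $\norm{(\psi\phi_0-\varphi)/D}_\infty\leq C_5\,\eps\,\theta_d(\eps)$ from step 4' of theorem~\ref{thm:node}. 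Both mechanisms work; yours trades the paper's topological bookkeeping for an explicit construction, at the cost of the extra compactness/limit step and the tube-radius normalisation you mention at the end.

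There is, however, one genuinely false step, in the Hausdorff part: the claim that the normal of $\partial M$ is vertical, so that horizontal geodesics are orthogonal to it and a Gronwall argument keeps $\gamma$ in $M\setminus\partial M$. For a general submersion metric $g=g_F+\pi^*g_B$ the horizontal distribution $(TF)^\perp$ need not be tangent to $\partial M$: already on $M=S^1\times[0,1]$ with $g=\ud s^2+(\ud r-a(s,r)\,\ud s)^2$ the horizontal space is spanned by $\partial_s+a\,\partial_r$, which is transverse to $\lbrace r=0\rbrace$ wherever $a\neq 0$, and nothing in the hypotheses forces the connection coefficient to vanish on $\partial M$. Hence a horizontal geodesic issued from an interior point sufficiently close to $\partial M$ can leave $M$ in time much shorter than $C\eps\,\theta_d(\eps)$, and your argument fails exactly for those starting points; the density argument does not repair this, since for fixed $\eps$ the set of points for which the geodesic survives is contained in a set of the form $\lbrace D\geq c\eps\,\theta_d(\eps)\rbrace$, whose closure misses $\partial M$. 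This is precisely why the paper works with the boundary-tangent lift $\hat X$ rather than the horizontal lift. The gap is local and fixable with tools you already deploy: in a boundary collar, run your depth-$\delta$ path construction through the given interior point (which yields an interior sign change, hence a zero, within $\mathcal{O}(\eps\,\theta_d(\eps))$ of it), and reserve the horizontal geodesic for points at a definite distance from $\partial M$ — or simply follow the integral curves of a boundary-tangent lift throughout, as the paper does.
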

\begin{proof}
The statement on the Hausdorff distance can be proved in the same way as in corollary~\ref{cor:Hausdorff}. That is, one constructs for every $x\in \pi^{-1}\mathcal{N}(\psi)\setminus \partial M$ a curve $\gamma$ with $\gamma(0)=x$, $\varphi(\gamma(t_+))>0$ and $\varphi(\gamma(t_-))<0$.
In order to prove the statement concerning $\mathcal{N}(\varphi)\cap \partial M$ we slightly refine this idea and construct a globally defined map
\begin{equation}\label{eq:proj node}
p{:}\,\pi^{-1}\Big(\lbrace x\in B: \dist\big(x, \mathcal{N}(\psi)\big)\leq C \eps \theta_d(\eps) \rbrace \Big) \to    \pi^{-1}\mathcal{N}(\psi)\,.
\end{equation}
To achieve this let $K$ be a connected component of $\mathcal{N}(\psi)$ and let $v_+$ be the unique unit normal to $K$ pointing into the region where $\psi$ is positive. Set $t_\pm:=\pm C\eps \theta_d(\eps)$, and let $T:=T_{t_+}(K)\subset B$ (in the notation of~\ref{cor:Hausdorff}) be the tubular neighbourhood of $K$ with radius $t_+$.
Parallel transport of $v_+$ along geodesics normal to $K$ defines a vector field $X$ on $T$.
We claim that there exists a lift $\hat X$ of $X$ to $\pi^{-1}(T)$ which is tangent to the boundary, that is we have
\begin{equation*}
\pi_*\hat X = X \,,\qquad
\hat X\vert_{\partial M}\in T\partial M\,.
\end{equation*}
In fact, $\hat X$ can be constructed by covering $T$ by open sets $U_i$, over which $M$ may be trivialised by maps $\Phi_i$ and patching together the vector fields $\Phi_i^*X$, which are clearly tangent to the boundary, using a partition of unity.
The flow of $\hat X$ projects to the flow of $X$ and since $\hat X$ is tangent to $\partial M$, its integral curves exist until their projection reaches the boundary of $T$. Every integral curve of $\hat X$ intersects $\pi^{-1}(K)$ exactly once, so projection along these integral curves defines a smooth map
\begin{equation*}
 p:\pi^{-1}(T) \to \pi^{-1}(K)\,.
\end{equation*}
Repeating this construction for the other components of $\mathcal{N}(\psi)$ defines the projection $p$ of equation~\eqref{eq:proj node}.

Now, because continuous images of connected sets are connected, $\partial M\cap \mathcal{N}(\varphi)$ has at least as many connected components as its image
${p\big(\partial M\cap \mathcal{N}(\varphi)\big)}$. Because $\hat X$ is tangent to the boundary this is contained in $\partial M\cap \pi^{-1}\mathcal{N}(\psi)$.
We conclude the proof by showing that the restriction \begin{equation*}
p{:}\, \partial M \cap \mathcal{N}(\varphi)\to  \partial M\cap \pi^{-1}\mathcal{N}(\psi)                                                                                                                 
\end{equation*}
 is onto. Assume there exists $y\in \partial M\cap \pi^{-1}\mathcal{N}(\psi)$ that is not contained in the image of $p\vert_{\partial M \cap \mathcal{N}(\varphi)}$. Then the integral curve $\gamma$ of $\hat X$ through $y$ does not intersect $\mathcal{N}(\varphi)$. Since the nodal set is closed, there exists also an open neighbourhood $U$ of $\gamma$ in $\pi^{-1}(T)$ with $\mathcal{N}(\varphi)\cap U=\varnothing$. But then there must be a curve in the interior
\begin{equation*}
 \tilde \gamma{:}\,[t_-, t_+] \to U\setminus \partial M\,,                                                                                                                                                                                                                                                                                                                                                                                                                                                                                                                                                                                                                                                                                                                                                                                                                                                                                                                                                                                                      
\end{equation*}
with $\pi(\tilde\gamma(t))=\pi(\gamma(t))$. It follows from theorem~\ref{thm:node} that ${\varphi(\tilde\gamma(t_+))>0}$, ${\varphi(\tilde\gamma(t_-))<0}$ and this contradicts the fact that $\tilde \gamma \cap \mathcal{N}(\varphi)=\varnothing$, so such a point point $y\in \partial M\cap \pi^{-1}\mathcal{N}(\psi)$ cannot exist. 
\end{proof}

\subsection*{Acknowledgements}
The author thanks David Krej{\v{c}}i{\v{r}}{\'\i}k and Stefan Teufel for stimulating discussions and the anonymous referee for valuable comments. 
Financial support from the European Research Council under the European Community’s Seventh Framework Programme (FP7/2007-2013 Grant Agreement MNIQS 258023) and the German Science Foundation (DFG) within the SFB Transregio 71 is gratefully acknowledged.

\bibliographystyle{abbrv}
\bibliography{bibliography}
\end{document}